\documentclass[11pt]{article}

\usepackage[a4paper,margin=1in]{geometry}
\usepackage{amsmath,amssymb,amsthm,mathtools}
\usepackage{bm}
\usepackage{enumitem}
\usepackage{booktabs}
\usepackage{algorithm}
\usepackage{float}
\usepackage{graphicx}
\usepackage{algpseudocode}
\usepackage{xcolor}
\usepackage{microtype}
\usepackage[T1]{fontenc}
\usepackage{lmodern}
\usepackage[colorlinks=true,linkcolor=blue!55!black,citecolor=blue!55!black,urlcolor=blue!55!black]{hyperref}

\allowdisplaybreaks
\numberwithin{equation}{section}

\newcommand{\R}{\mathbb{R}}
\newcommand{\C}{\mathbb{C}}

\newcommand{\Sph}{\mathbb{S}}
\newcommand{\ii}{\mathrm{i}}
\newcommand{\dd}{\,\mathrm d}
\newcommand{\supp}{\operatorname{supp}}
\newcommand{\rank}{\operatorname{rank}}

\newcommand{\curl}{\nabla\times}

\newcommand{\vect}[1]{\bm{#1}}
\newcommand{\tens}[1]{\bm{\mathsf{#1}}}

\newcommand{\vecop}{\ensuremath{\operatorname{vec}}}
\newcommand{\Id}{\ensuremath{\tens{I}}}

\newcommand{\F}{\mathcal{F}}
\newcommand{\Dop}{\mathcal{D}}
\newcommand{\G}{\ensuremath{\tens{G}}}

\newcommand{\epsr}{\ensuremath{\bm{\varepsilon}_{\mathrm r}}}
\newcommand{\Qt}{\ensuremath{\tens{Q}}}
\newcommand{\Tt}{\ensuremath{\tens{T}}}
\newcommand{\Mt}{\ensuremath{\tens{M}}}
\newcommand{\Bt}{\ensuremath{\tens{B}}}

\newcommand{\Frob}{\mathrm F}

\theoremstyle{plain}
\newtheorem{theorem}{Theorem}[section]
\newtheorem{proposition}[theorem]{Proposition}
\newtheorem{lemma}[theorem]{Lemma}
\newtheorem{corollary}[theorem]{Corollary}
\theoremstyle{definition}

\newtheorem{assumptionx}[theorem]{Assumption}
\newenvironment{keywords}{\par\noindent\textbf{Keywords.} }{\par}
\newenvironment{MSCcodes}{\par\noindent\textbf{MSC codes.} }{\par}
\newcommand{\email}[1]{\texttt{#1}}

\title{Polarimetric Low-Rank Imaging of Anisotropic Maxwell Media}
\author{Xianbo Chen\thanks{School of Mathematical Sciences, Zhejiang University, Hangzhou 310058, China (\email{chenxianbo@zju.edu.cn}).}
\and Jingzhi Li\thanks{Department of Mathematics \& SUSTech International Center for Mathematics, Southern University of Science and Technology, Shenzhen 518052, China (\email{li.jz@sustech.edu.cn}).}
\and Ping Liu\thanks{School of Mathematical Sciences and Institute of Fundamental and Transdisciplinary Research, Zhejiang University, Hangzhou 310058, China (\email{pingliu@zju.edu.cn}).}}
\date{}

\begin{document}
\maketitle

\begin{abstract}
We consider three-dimensional imaging of compactly supported anisotropic electric media from fixed-frequency polarimetric Maxwell far-field data. Maxwell transversality creates an obstruction absent from scalar Born inversion: each measurement determines only a projected action of the tensor-valued Fourier transform. We first resolve this obstruction by a finite-dimensional polarimetric inversion. For a general complex $3\times3$ contrast, three incidence--observation configurations are necessary, and we give an explicit three-configuration design that is sufficient at every nonzero interior Fourier point. Its Gram determinant yields quantitative conditioning information and exhibits the loss of recoverability at the boundary of the accessible Fourier ball. After tensor demixing, the inverse problem reduces componentwise to restricted Fourier inversion on the unit ball, which we regularize by spectral cutoff in three-dimensional ball generalized prolate spheroidal wave functions. For discrete acquisition, a combined reconstruction error estimate separates the effects of measurement noise, polarimetric amplification, off-grid Fourier mismatch, quadrature error, Born modeling error, and spectral truncation. Numerical tests examine polarimetric conditioning and use analytically generated Born data to compare three- and six-configuration recovery and assess off-grid sensitivity. Reconstructions from full-Maxwell volume-integral data assess the effects of noise, wavenumber, and basis choice, as well as the recovery of different tensor components.
\end{abstract}

\begin{keywords}
inverse electromagnetic scattering, anisotropic permittivity, polarimetric imaging, restricted Fourier transform, generalized prolate spheroidal wave functions, low-rank regularization
\end{keywords}

\begin{MSCcodes}
35R30, 65R32, 78A46
\end{MSCcodes}

\section{Introduction}\label{sec:intro}

Three-dimensional electromagnetic imaging of anisotropic media contains an information bottleneck that is absent from scalar inverse scattering. Under a first Born linearization, scalar far-field data can be related directly to Fourier samples of the material contrast. In the Maxwell setting, however, the unknown electric contrast is tensor valued, while both the incident polarization and the measured electric far field are constrained by transversality. Consequently, a single polarimetric measurement reveals only a projected action of the tensor-valued Fourier transform rather than the tensor itself. A quantitative reconstruction method must therefore first resolve a finite-dimensional tensor-identification problem at each accessible Fourier point and subsequently invert the resulting spatially band-limited Fourier data. The present paper separates these two inverse mechanisms and tracks the stability of the polarimetric stage through the subsequent regularized reconstruction.

Inverse electromagnetic scattering has been studied through qualitative and quantitative approaches; standard references include \cite{CakoniColtonHaddar2016,CakoniColtonMonk2011,ColtonKress2019,KirschGrinberg2008}. Sampling and factorization techniques can localize targets efficiently \cite{HarrisNguyen2020,ItoJinZou2012,LeNguyenSchmidtTruong2022,LiLiuWang2013,Potthast2010}, whereas contrast-source and related iterative methods seek quantitative material reconstructions from the full nonlinear model \cite{LiAbubakarVanDenBerg2009}. For anisotropic electromagnetics, uniqueness theory is available under structural assumptions \cite{KenigSaloUhlmann2011}, and polarization-sensitive linearized inverse problems have been analyzed in several settings. Gilman, Smith, and Tsynkov \cite{GilmanSmithTsynkov2012} derived explicit relations between polarized scattering data and the parameters of a homogeneous uniaxial medium under Born linearization. Elbau, Mindrinos, and Scherzer \cite{ElbauMindrinosScherzer2015} combined incident polarizations and sample rotations to establish recovery conditions for general tensor-valued Radon data under the first-order Born approximation, without carrying out a numerical reconstruction of the spatial tensor. Their subsequent work \cite{ElbauMindrinosScherzer2018} formulated integral equations for an orthotropic susceptibility tensor using a second-order Born approximation. Balandin \cite{Balandin2023,BalandinErratum2024} derived a tensor Fourier diffraction relation connecting planar scattered-field measurements to the Fourier transform of the dielectric contrast. The transverse projection in this relation is singular, however, and cannot be removed by ordinary matrix inversion. These studies either focus on restricted tensor classes, such as uniaxial or orthotropic media, or establish recovery relations for general tensors without quantifying the conditioning and stability of the polarimetric inversion. They also do not combine this recovery with a basis projection adapted to the restricted Fourier operator to quantitatively reconstruct the spatially varying material tensor.

A second ingredient is the spectral inversion of Fourier data known only on a bounded region. Generalized prolate spheroidal wave functions (GPSWFs) are eigenfunctions of the restricted Fourier transform and are therefore adapted simultaneously to spatial support and Fourier bandwidth \cite{Greengard2024,Slepian1964,ZhangLiWangZhang2020}. For scalar inverse scattering, Meng \cite{Meng2023} developed a Picard regularization framework, and Zhou, Audibert, Meng, and Zhang \cite{ZhouAudibertMengZhang2025} exploited GPSWF low-rank structure numerically and analytically. Our use of ball GPSWFs builds on this scalar restricted-Fourier theory. The Maxwell contribution is not the GPSWF basis by itself: before restricted-Fourier inversion is available, one must demix a full tensor from transverse polarimetric measurements, control the geometry-dependent conditioning, and account for the fact that a finite set of realizable incidence--observation pairs generally samples nearby rather than identical Fourier nodes.

The main results are as follows. First, we extend the well-posedness result for the forward problem to general bounded tensor contrasts in strictly dissipative media whose relative permittivity has a uniformly positive definite real part, and derive a volume-integral representation of the scattered field. Using the first-order Born approximation, we express the far field in terms of a transverse projection of the tensor Fourier transform acting on the incident polarization. Second, we use this relation to construct a finite-dimensional polarimetric recovery method and analyze its identifiability and conditioning. For a general complex three-by-three tensor, a single incidence--observation configuration carries at most four independent complex degrees of freedom, so at least three configurations are necessary. For every nonzero interior Fourier point, we construct three equiangular configurations and prove that they are sufficient. A closed-form determinant of the associated Gram matrix establishes full rank at these points, and singular-value asymptotics quantify the deterioration of this design near the center and boundary. At the center, a separate three-axis design is full rank; at the boundary, a general tensor is not identifiable. Third, after polarimetric recovery, we regularize the restricted Fourier inversion on the ball by GPSWF spectral cutoff to reconstruct the spatially varying tensor contrast. Finally, for discrete acquisition, we prove a Lipschitz estimate for the error caused by replacing a target quadrature node by nearby realizable Fourier points and derive a combined reconstruction error bound. This bound accounts for measurement noise, Born modeling error, off-grid mismatch, quadrature error, and spectral truncation, together with polarimetric and spectral amplification.

The numerical experiments examine both polarimetric recovery and spatial reconstruction. Using analytically generated Born data, we compare tensor recovery from three and six measurement configurations in the presence of noise. We then test how the recovery error changes with the distance between the target Fourier nodes and the nearby measurement points, and compare the results with the off-grid error estimate. For spatial reconstruction, we use far-field data generated by the full Maxwell volume integral equation and apply the Born-based reconstruction. We examine the effects of the retained GPSWF dimension, measurement noise, and wavenumber, and compare GPSWF, Fourier, and spherical-harmonic Bessel bases using the same recovered Fourier data. Finally, we reconstruct different components of the complex tensor contrast to examine the recovery of the material differences among the three inclusions.

The paper is organized as follows. Section~\ref{sec:forward} derives the Maxwell far-field and tensor Fourier relation and quantifies the modeling remainder introduced by the Born linearization. Section~\ref{sec:born} establishes polarimetric identifiability and conditioning. Section~\ref{sec:gpswf} develops the restricted-Fourier GPSWF inversion. Section~\ref{sec:discretization} analyzes off-grid acquisition and derives a combined discrete reconstruction error estimate. Section~\ref{sec:numerics} reports numerical results. The appendices include implementation details for the ball GPSWF basis.

\section{Maxwell model and tensor diffraction relation}\label{sec:forward}

Let $D\subset\R^3$ be a bounded open set with Lipschitz boundary occupied by the inhomogeneous medium. We consider a homogeneous, nonmagnetic background with wavenumber $k>0$ and use the time dependence $e^{-\ii\omega t}$. The relative permittivity is $\epsr=\Id+\Qt$, where $\Qt\in L^\infty(\R^3;\C^{3\times3})$ is supported in $\overline D$. The incident electric field is the plane wave $\vect{E}^{\rm i}(\vect{x})=\vect{e}e^{\ii k\vect{d}\cdot\vect{x}}$, with $\vect{d}\in\Sph^2$ and $\vect{e}\in\C^3$ satisfying $\vect{d}\cdot\vect{e}=0$. The total field $\vect{E}=\vect{E}^{\rm i}+\vect{E}^{\rm s}$ solves
\begin{equation}\label{eq:maxwell_total}
 \curl\curl\vect{E}-k^2(\Id+\Qt)\vect{E}=0\quad\text{in }\R^3,
\end{equation}
and the scattered field satisfies the Silver--M\"uller radiation condition
\begin{equation*}\label{eq:radiation}
 \lim_{r\to\infty}r\big[(\curl\vect{E}^{\rm s})(\vect{x})\times\hat{\vect{x}}
       -\ii k\vect{E}^{\rm s}(\vect{x})\big]=0,
 \quad r=|\vect{x}|,\quad\hat{\vect{x}}=\vect{x}/r,
\end{equation*}
uniformly in $\hat{\vect{x}}\in\Sph^2$.

To specify the class of anisotropic media considered here, we impose the following assumptions on the material contrast.
\begin{assumptionx}\label{ass:material}
There is a constant $\gamma>0$ such that, for almost every $\vect{x}\in D$,
\begin{equation*}
\operatorname{Re}\!\left[\vect{z}^*(\Id+\Qt(\vect{x}))\vect{z}\right]\geq\gamma|\vect{z}|^2
\quad\text{for all }\vect{z}\in\C^3,
\end{equation*}
and
\begin{equation}\label{equ:assumpq1}
\operatorname{Im}\!\left[\vect{z}^*(\Id+\Qt(\vect{x}))\vect{z}\right]\geq 0
\quad\text{for all }\vect{z}\in\C^3\setminus \{0\}.
\end{equation}
\end{assumptionx}

We next derive an integral representation of the scattered field. Since the background medium is homogeneous and isotropic, we begin with the outgoing fundamental solution of the scalar Helmholtz equation,
\begin{equation*}
\Phi_k(\vect{x})=\frac{e^{\ii k|\vect{x}|}}{4\pi|\vect{x}|},\quad \vect{x}\in\R^3\setminus\{\vect{0}\},
\end{equation*}
which satisfies
\begin{equation*}
(\Delta+k^2)\Phi_k=-\delta_0
\end{equation*}
in the distributional sense. The outgoing Green tensor associated with the Maxwell operator is defined distributionally by
\begin{equation*}\label{eq:maxwell_green_tensor}
\G_k(\vect{x})=\left(\Id+k^{-2}\nabla\nabla^\top\right)\Phi_k(\vect{x}).
\end{equation*}
Using the identity $\curl\curl=\nabla\nabla\cdot-\Delta$, one obtains
\begin{equation*}
(\curl\curl-k^2\Id)\G_k=\Id\delta_0
\end{equation*}
in the distributional sense.

\begin{theorem}\label{prop:volume_integral}
Let $\Qt$ be piecewise smooth and symmetric, satisfying Assumption~\ref{ass:material}. Then \eqref{eq:maxwell_total} admits a unique total field $\vect{E}\in H_{\mathrm{loc}}(\operatorname{curl};\R^3)$ whose scattered field satisfies the radiation condition. The scattered field can be represented by
\begin{equation}\label{eq:volume_representation}
\vect{E}^{\mathrm s}(\vect{x})=k^2\int_D\G_k(\vect{x}-\vect{y})\Qt(\vect{y})\vect{E}(\vect{y})\dd\vect{y},
\end{equation}
and consequently
\begin{equation}\label{eq:volume_integral_equation}
\vect{E}(\vect{x})=\vect{E}^{\mathrm i}(\vect{x})+k^2\int_D\G_k(\vect{x}-\vect{y})\Qt(\vect{y})\vect{E}(\vect{y})\dd\vect{y}.
\end{equation}
Moreover, for strictly dissipative media, i.e., when the inequality in \eqref{equ:assumpq1} is strict, the above conclusions hold for general $\Qt\in L^\infty(\R^3;\C^{3\times3})$.
\end{theorem}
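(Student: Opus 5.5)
The plan has two parts. First we derive the representation \eqref{eq:volume_representation} for any radiating solution. Then we prove uniqueness, and finally existence via a Fredholm argument. Two remarks fix the setup. Since $\Qt$ is supported in $\overline D$, the scattered field solves $\curl\curl\vect{E}^{\rm s}-k^2\vect{E}^{\rm s}=k^2\Qt\vect{E}$ in $\R^3$. The source $\vect{J}:=k^2\Qt\vect{E}$ lies in $L^2$ with compact support.

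\emph{Representation.} Convolving with $\G_k$ gives the radiating field $\vect{w}:=\G_k*\vect{J}$, which satisfies $(\curl\curl-k^2)\vect{w}=\vect{J}$ distributionally. To see that $\vect{w}$ lies in $H_{\rm loc}(\operatorname{curl})$, write $\vect{w}=\Phi_k*\vect{J}+k^{-2}\nabla(\nabla\cdot(\Phi_k*\vect{J}))$. The gradient term is curl-free, so $\curl\vect{w}=\curl(\Phi_k*\vect{J})\in H^1_{\rm loc}$, and $\vect{w}\in L^2_{\rm loc}$ because $\nabla\nabla^\top\Phi_k*$ is a Calder\'on--Zygmund operator plus a smooth part. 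The difference $\vect{E}^{\rm s}-\vect{w}$ solves the homogeneous background Maxwell system in all of $\R^3$ and is radiating, so it vanishes by the uniqueness of entire radiating solutions (Rellich). This yields \eqref{eq:volume_representation}, and adding $\vect{E}^{\rm i}$ gives \eqref{eq:volume_integral_equation}.

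\emph{Uniqueness.} Let $\vect{E}$ be a radiating solution with $\vect{E}^{\rm i}=0$. Multiply by $\overline{\vect{E}}$ and integrate over a ball $B_R\supset\overline D$. Taking the imaginary part of the variational identity gives
\[
\operatorname{Im}\int_{\partial B_R}(\hat{\vect{x}}\times\curl\vect{E})\cdot\overline{\vect{E}}\,ds=-k^2\int_D\operatorname{Im}\big[\overline{\vect{E}}^\top(\Id+\Qt)\vect{E}\big]\dd\vect{x}\le0
\]
(up to a sign convention). The radiation condition turns the left side into $k\int_{\Sph^2}|\vect{E}^\infty|^2$ in the limit. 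Hence the far field vanishes, and Rellich's lemma gives $\vect{E}=0$ outside $D$. In the strictly dissipative case the right side also forces $\vect{E}=0$ in $D$ directly; this needs only $\Qt\in L^\infty$, with no smoothness or symmetry. In the merely nonnegative case we must use unique continuation from $\R^3\setminus\overline D$ into $D$. Symmetry and piecewise smoothness of $\Qt$ are needed exactly here: Lipschitz (piecewise, across interfaces) anisotropic coefficients admit unique continuation for Maxwell after reduction to an elliptic system with a principal part of Laplace type, and the symmetry plus real-part coercivity make that reduction available. We continue domain by domain through the smooth pieces, propagating zero Cauchy data across interfaces via transmission conditions.

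\emph{Existence.} We pose the problem on $B_R$ with the exterior Calder\'on (electric-to-magnetic) operator. The sesquilinear form splits as $a=a_0+c$, where $a_0$ is coercive on $H(\operatorname{curl};B_R)$ modulo gradients. We use a Helmholtz decomposition $H(\operatorname{curl})=X\oplus\nabla H^1$, weighted by $\Id+\Qt$. The real-part bound $\operatorname{Re}\,\vect{z}^*(\Id+\Qt)\vect{z}\ge\gamma|\vect{z}|^2$ gives coercivity of the gradient part. On $X$, compactness comes from a compact embedding into $L^2$ of the Weber type, valid for $L^\infty$ coefficients with positive definite real part. This embedding is the main technical obstacle, and I would invoke the standard Kirsch--Hettlich or Monk framework rather than reprove it. Fredholm alternative plus uniqueness then gives existence of $\vect{E}\in H_{\rm loc}(\operatorname{curl};\R^3)$.

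The hardest step is unique continuation in the non-strictly-dissipative case; this is why the general $L^\infty$ statement is restricted to strict dissipativity, where the energy identity bypasses it.
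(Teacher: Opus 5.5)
Your outline is correct, but it reaches the result by a different route from the paper. The paper does not reprove the piecewise-smooth symmetric case; it cites \cite{CakoniColtonMonk2011}. Its own argument treats only the strictly dissipative $L^\infty$ case, and it works entirely with the volume integral equation in $L^2(D)^3$. It writes $I-k^2V_Q=I+\mathcal P(\Qt\,\cdot)|_D+K$, where $\mathcal P$ is the Fourier multiplier $\vect\xi\vect\xi^\top/|\vect\xi|^2$, i.e.\ the orthogonal projection onto gradient fields, and $K$ has an $O(|\vect x-\vect y|^{-1})$, hence Hilbert--Schmidt, kernel. It inverts $I+\mathcal P(\Qt\,\cdot)|_D$ by Lax--Milgram on $\operatorname{Ran}\mathcal P$, using exactly $\operatorname{Re}[\vect z^*(\Id+\Qt)\vect z]\ge\gamma|\vect z|^2$. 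It then proves injectivity by applying the energy identity to the volume potential of a null vector $\vect u$: strict dissipativity gives $\vect u=0$, so the potential vanishes identically, and neither Rellich's lemma nor unique continuation is needed.

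Your existence proof rests on the same mechanism, namely coercivity on the gradient component and compactness on its complement. You realize it differently, through a truncated variational problem with the Calder\'on operator, a weighted Helmholtz decomposition, and a Weber-type embedding. The weighted compactness you single out as the main obstacle follows from the unweighted one by one more Lax--Milgram solve on gradients, which needs only the real-part bound in Assumption~\ref{ass:material}.

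Each route buys something different. The paper's is elementary and self-contained, and it works directly with the operator $I-k^2V_Q$ on $L^2(D)^3$ that Proposition~\ref{prop:born_error} and the VIE-based numerics use. Yours handles both cases in one framework and gives an argument where the paper only cites. However, it delegates its technical core (the splitting of the Calder\'on operator and the compact embedding) to Monk or Kirsch--Hettlich. It also reaches solvability of the integral equation only indirectly, through your representation step.

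Two minor corrections. First, the sign in your energy identity. With $(\hat{\vect x}\times\curl\vect E)\cdot\overline{\vect E}$ the boundary term tends to $-k\int_{\Sph^2}|\vect E^\infty|^2$, so your displayed inequality points the wrong way. The consistent form is $\operatorname{Im}\int_{\partial B_R}(\curl\vect E\times\hat{\vect x})\cdot\overline{\vect E}\,ds=-k^2\int_D\operatorname{Im}[\vect E^*(\Id+\Qt)\vect E]\dd\vect x\le0$, whose left side tends to $k\int_{\Sph^2}|\vect E^\infty|^2\ge0$.

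Second, your unique continuation remark. For a matrix-valued permittivity, the constraint $\nabla\cdot((\Id+\Qt)\vect E)=0$ does not express $\nabla\cdot\vect E$ through lower-order terms. The isotropic device that turns $\curl\curl$ into $-\Delta$ plus lower-order terms is therefore unavailable, and the reduced system does not have a Laplace-type principal part. Unique continuation in the non-strictly-dissipative case needs results specific to anisotropic Maxwell systems. Since the paper handles that case only by citation, this does not leave your argument behind the paper's.
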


\begin{proof}
For piecewise smooth symmetric contrasts, well-posedness follows from \cite[Theorem~4.2]{CakoniColtonMonk2011}. We now prove the extension to general bounded tensor contrasts under strict dissipativity.

Any radiating solution satisfies
\begin{equation*}
(\curl\curl-k^2\Id)\vect{E}^{\mathrm s}=k^2\Qt\vect{E}.
\end{equation*}
The outgoing Green tensor gives \eqref{eq:volume_representation} and hence \eqref{eq:volume_integral_equation}. Writing
\begin{equation*}
(V_Q\vect{u})(\vect{x})=\int_D\G_k(\vect{x}-\vect{y})\Qt(\vect{y})\vect{u}(\vect{y})\dd\vect{y},
\end{equation*}
the volume equation becomes $(I-k^2V_Q)\vect{E}=\vect{E}^{\mathrm i}$ in $L^2(D)^3$.

To establish existence and uniqueness, we first show that $I-k^2V_Q$ is Fredholm of index zero. Let $\Phi_0(\vect{x})=(4\pi|\vect{x}|)^{-1}$ and define
\begin{equation*}
\mathcal P\vect{f}=-\nabla\nabla^\top(\Phi_0*\vect{f}).
\end{equation*}
The derivatives are interpreted distributionally. 
In Fourier space,
\begin{equation*}
\widehat{\mathcal P\vect{f}}(\vect{\xi})
=\frac{\vect{\xi}\vect{\xi}^\top}{|\vect{\xi}|^2}\widehat{\vect{f}}(\vect{\xi}),\qquad \vect{\xi}\neq\vect{0}.
\end{equation*}
The multiplier is Hermitian and idempotent, which gives $\mathcal P^*=\mathcal P$ and $\mathcal P^2=\mathcal P$, so $\mathcal P$ is an orthogonal projection on $L^2(\R^3)^3$.
Products involving $\Qt$ are extended by zero outside $D$. Since
\begin{equation*}
k^2\G_k=k^2\Phi_k\Id+\nabla\nabla^\top\Phi_0+\nabla\nabla^\top(\Phi_k-\Phi_0),
\end{equation*}
we have
\begin{align}
(I-k^2V_Q)\vect{u}
&=\vect{u}-k^2\int_D\Phi_k(\vect{x}-\vect{y})\Qt(\vect{y})\vect{u}(\vect{y})\dd\vect{y}-\nabla_{\vect{x}}\nabla_{\vect{x}}^\top\int_D\Phi_k(\vect{x}-\vect{y})\Qt(\vect{y})\vect{u}(\vect{y})\dd\vect{y}\nonumber\\
&=\vect{u}-\nabla_{\vect{x}}\nabla_{\vect{x}}^\top\int_D\Phi_0(\vect{x}-\vect{y})\Qt(\vect{y})\vect{u}(\vect{y})\dd\vect{y}\nonumber-k^2\int_D\Phi_k(\vect{x}-\vect{y})\Qt(\vect{y})\vect{u}(\vect{y})\dd\vect{y}\\
&\quad-\nabla_{\vect{x}}\nabla_{\vect{x}}^\top\int_D(\Phi_k-\Phi_0)(\vect{x}-\vect{y})\Qt(\vect{y})\vect{u}(\vect{y})\dd\vect{y}\nonumber\\
&=\vect{u}+\left.\mathcal P(\Qt\vect{u})\right|_D+K\vect{u},
\label{equ:operatordecom1}
\end{align}
where
\begin{equation*}
(K\vect{u})(\vect{x})=-k^2\int_D\Phi_k(\vect{x}-\vect{y})\Qt(\vect{y})\vect{u}(\vect{y})\dd\vect{y}-\int_D\nabla_{\vect{x}}\nabla_{\vect{x}}^\top(\Phi_k-\Phi_0)(\vect{x}-\vect{y})\Qt(\vect{y})\vect{u}(\vect{y})\dd\vect{y}.
\end{equation*}
The operator $K$ is compact: both kernels are $O(|\vect{x}-\vect{y}|^{-1})$ and belong to $L^2(D\times D)$.

For $\vect{f}\in L^2(D)^3$, seek $\vect{w}\in\operatorname{Ran}\mathcal P$ such that
\begin{equation*}
\langle\vect{w},\vect{z}\rangle_{\R^3}+\langle\Qt\vect{w},\vect{z}\rangle_D
=\langle\Qt\vect{f},\vect{z}\rangle_D,
\end{equation*}
for all $\vect{z}\in\operatorname{Ran}\mathcal P$.
With inner products linear in the first argument, Assumption~\ref{ass:material} gives
\begin{equation*}
\begin{aligned}
\operatorname{Re}\left[\langle\vect{w},\vect{w}\rangle_{\R^3}+\langle\Qt\vect{w},\vect{w}\rangle_D\right]
&=\|\vect{w}\|_{L^2(\R^3\setminus D)^3}^2+\int_D\operatorname{Re}\left[\vect{w}^*(\Id+\Qt)\vect{w}\right]\dd\vect{x}\\
&\geq\|\vect{w}\|_{L^2(\R^3\setminus D)^3}^2+\gamma\|\vect{w}\|_{L^2(D)^3}^2\\
&\geq\min\{1,\gamma\}\|\vect{w}\|_{L^2(\R^3)^3}^2.
\end{aligned}
\end{equation*}
Lax--Milgram gives a unique $\vect{w}$. Because $\mathcal P$ is an orthogonal projection, its equation is
\begin{equation*}
\vect{w}+\mathcal P(\Qt\vect{w})=\mathcal P(\Qt\vect{f}).
\end{equation*}
Thus $\vect{u}=\vect{f}-\vect{w}|_D$ uniquely solves
\begin{equation*}
\vect{u}+\left.\mathcal P(\Qt\vect{u})\right|_D=\vect{f},
\end{equation*}
with $\|\vect{u}\|_{L^2(D)^3}\leq C\|\vect{f}\|_{L^2(D)^3}$. Consequently, together with decomposition (\ref{equ:operatordecom1}), $I-k^2V_Q$ is Fredholm of index zero.

If $(I-k^2V_Q)\vect{u}=0$, define
\begin{equation*}
\vect{E}(\vect{x})=k^2\int_D\G_k(\vect{x}-\vect{y})\Qt(\vect{y})\vect{u}(\vect{y})\dd\vect{y}.
\end{equation*}
Then $\vect{E}|_D=\vect{u}$ and $\curl\curl\vect{E}=k^2(\Id+\Qt)\vect{E}$. Applying the weak Green identity over a ball $B_\rho$ containing $\overline D$ gives
\begin{equation*}
\int_{B_\rho}\left(|\curl\vect{E}|^2-k^2\vect{E}^*(\Id+\Qt)\vect{E}\right)\dd\vect{x}
=\int_{\partial B_\rho}(\curl\vect{E}\times\vect{n})\cdot\overline{\vect{E}}\dd S.
\end{equation*}
Taking imaginary parts yields
\begin{equation*}
k^2\int_D\operatorname{Im}\!\left[\vect{E}^*(\Id+\Qt)\vect{E}\right]\dd\vect{x}
+\operatorname{Im}\int_{\partial B_\rho}(\curl\vect{E}\times\vect{n})\cdot\overline{\vect{E}}\dd S=0.
\end{equation*}
The far-field asymptotics of the volume potential give
\begin{equation*}
\operatorname{Im}\int_{\partial B_\rho}(\curl\vect{E}\times\vect{n})\cdot\overline{\vect{E}}\dd S
=k\int_{\partial B_\rho}|\vect{E}|^2\dd S+O(\rho^{-1}),\quad \rho\to\infty.
\end{equation*}
Passing to the limit yields
\begin{equation*}
k^2\int_D\operatorname{Im}\!\left[\vect{E}^*(\Id+\Qt)\vect{E}\right]\dd\vect{x}
+k\lim_{\rho\to\infty}\int_{\partial B_\rho}|\vect{E}|^2\dd S=0.
\end{equation*}
Strict dissipativity implies $\vect{u}=\vect{E}|_D = 0$. The Fredholm alternative therefore gives a unique solution of the volume equation and continuous dependence on the incident field. Formula~\eqref{eq:volume_integral_equation} extends it to the unique radiating Maxwell solution in $H_{\mathrm{loc}}(\operatorname{curl};\R^3)$.
\end{proof}

The Green-tensor convolution for interior points is understood in the distributional sense.
We now use the volume representation to obtain the far-field pattern. For bounded $\vect{y}$, the Helmholtz fundamental solution satisfies
\begin{equation*}
\Phi_k(\vect{x}-\vect{y})=\frac{e^{\ii k|\vect{x}|}}{4\pi |\vect{x}|}e^{-\ii k\hat{\vect{x}}\cdot\vect{y}}+O(|\vect{x}|^{-2}),\quad |\vect{x}|\to\infty,
\end{equation*}
uniformly with respect to $\hat{\vect{x}}\in\Sph^2$. It follows that the Maxwell Green tensor has the asymptotic expansion
\begin{equation}\label{eq:green_farfield}
\G_k(\vect{x}-\vect{y})=\frac{e^{\ii k|\vect{x}|}}{4\pi |\vect{x}|}\tens{P}_{\hat{\vect{x}}}e^{-\ii k\hat{\vect{x}}\cdot\vect{y}}+O(|\vect{x}|^{-2}),\quad \tens{P}_{\hat{\vect{x}}}=\Id-\hat{\vect{x}}\hat{\vect{x}}^\top.
\end{equation}
The matrix $\tens{P}_{\hat{\vect{x}}}$ is the orthogonal projection onto the plane perpendicular to $\hat{\vect{x}}$.

Substituting \eqref{eq:green_farfield} into \eqref{eq:volume_representation} gives the following far-field representation.

\begin{corollary}\label{cor:farfield}
Let $\vect{E}$ be the radiating solution considered in Theorem~\ref{prop:volume_integral}. Then the scattered field satisfies
\begin{equation}\label{eq:far_field_expansion}
\vect{E}^{\mathrm s}(\vect{x};\vect{d},\vect{e};k)=\frac{e^{\ii k|\vect{x}|}}{|\vect{x}|}\vect{E}^{\infty}(\hat{\vect{x}};\vect{d},\vect{e};k)+O(|\vect{x}|^{-2}),\quad |\vect{x}|\to\infty,
\end{equation}
where
\begin{equation}\label{eq:exact_farfield}
\vect{E}^{\infty}(\hat{\vect{x}};\vect{d},\vect{e};k)=\frac{k^2}{4\pi}\tens{P}_{\hat{\vect{x}}}\int_D e^{-\ii k\hat{\vect{x}}\cdot\vect{y}}\Qt(\vect{y})\vect{E}(\vect{y};\vect{d},\vect{e};k)\dd\vect{y}.
\end{equation}
In particular,
\begin{equation*}
\hat{\vect{x}}\cdot\vect{E}^{\infty}(\hat{\vect{x}};\vect{d},\vect{e};k)=0,
\end{equation*}
so the electric far-field pattern is transverse to the observation direction.
\end{corollary}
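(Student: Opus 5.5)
The plan is to insert the asymptotic expansion \eqref{eq:green_farfield} into the volume representation \eqref{eq:volume_representation} of Theorem~\ref{prop:volume_integral}, and then check that the remainder integrates to $O(|\vect{x}|^{-2})$. First I would pin down the regularity of the density. By Theorem~\ref{prop:volume_integral}, $\vect{E}|_D\in L^2(D)^3$ solves the volume equation, and $\Qt\in L^\infty$. Hence $\vect{J}:=\Qt\vect{E}\in L^2(D)^3\subset L^1(D)^3$, because $D$ is bounded. For $|\vect{x}|\ge 2R$ with $D\subset B_R$, the point $\vect{x}$ lies away from $\overline D$. The kernel $\G_k(\vect{x}-\vect{y})$ is therefore smooth in $\vect{y}\in\overline D$, and the integral in \eqref{eq:volume_representation} is an ordinary Lebesgue integral, with no distributional interpretation needed.

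Next I would derive \eqref{eq:green_farfield} with a remainder that is uniform in $\vect{y}\in\overline D$ and $\hat{\vect{x}}$. Write $r=|\vect{x}-\vect{y}|$, so that $r=|\vect{x}|-\hat{\vect{x}}\cdot\vect{y}+O(|\vect{x}|^{-1})$ and $\nabla(\vect{x}-\vect{y})/r=\hat{\vect{x}}+O(|\vect{x}|^{-1})$. Differentiating $\Phi_k$ twice gives
\begin{equation*}
\nabla\nabla^\top\Phi_k(\vect{x}-\vect{y})=-k^2\,\widehat{(\vect{x}-\vect{y})}\,\widehat{(\vect{x}-\vect{y})}^\top\Phi_k(\vect{x}-\vect{y})+O(r^{-2}),
\end{equation*}
where $\widehat{\vect{v}}=\vect{v}/|\vect{v}|$. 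Combining these expansions,
\begin{equation*}
\G_k(\vect{x}-\vect{y})=\frac{e^{\ii k|\vect{x}|}}{4\pi|\vect{x}|}e^{-\ii k\hat{\vect{x}}\cdot\vect{y}}\tens{P}_{\hat{\vect{x}}}+\tens{R}(\vect{x},\vect{y}),
\qquad
|\tens{R}(\vect{x},\vect{y})|\le C|\vect{x}|^{-2},
\end{equation*}
where $C$ depends only on $k$ and $R$.

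Integrating against $k^2\vect{J}$ gives the leading term
\begin{equation*}
\frac{e^{\ii k|\vect{x}|}}{|\vect{x}|}\,\frac{k^2}{4\pi}\,\tens{P}_{\hat{\vect{x}}}\int_D e^{-\ii k\hat{\vect{x}}\cdot\vect{y}}\Qt\vect{E}\dd\vect{y}.
\end{equation*}
The remainder is bounded by $k^2C|\vect{x}|^{-2}\|\vect{J}\|_{L^1(D)}$. This proves \eqref{eq:far_field_expansion} and \eqref{eq:exact_farfield}, uniformly in $\hat{\vect{x}}$. Transversality follows directly, since $\hat{\vect{x}}^\top\tens{P}_{\hat{\vect{x}}}=\hat{\vect{x}}^\top-(\hat{\vect{x}}^\top\hat{\vect{x}})\hat{\vect{x}}^\top=0$.

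The main point requiring care is that the $O(|\vect{x}|^{-2})$ remainder is uniform in $\vect{y}$. This matters because we only know $\vect{J}\in L^1$, so all the uniformity has to come from the kernel. To handle it, I would write out the explicit closed form of $\nabla\nabla^\top\Phi_k$, which consists of terms in $r^{-1}$, $r^{-2}$ and $r^{-3}$ multiplying $\Id$ and $\widehat{(\vect{x}-\vect{y})}\widehat{(\vect{x}-\vect{y})}^\top$. I would then bound each term by Taylor expansion, using $|\vect{y}|\le R\le |\vect{x}|/2$.
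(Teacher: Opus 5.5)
Your proposal is correct and follows the same route as the paper, which inserts \eqref{eq:green_farfield} into \eqref{eq:volume_representation} and reads off transversality from $\tens{P}_{\hat{\vect{x}}}\hat{\vect{x}}=\vect{0}$. Your version also justifies two steps the paper leaves implicit: that the $O(|\vect{x}|^{-2})$ remainder of the Green tensor is uniform in $\vect{y}\in\overline D$, and that it can therefore be integrated against $\Qt\vect{E}\in L^1(D)^3$.
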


\begin{proof}
Inserting \eqref{eq:green_farfield} into \eqref{eq:volume_representation} yields
\begin{equation*}
\vect{E}^{\mathrm s}(\vect{x})=\frac{e^{\ii k|\vect{x}|}}{|\vect{x}|}\frac{k^2}{4\pi}\tens{P}_{\hat{\vect{x}}}\int_D e^{-\ii k\hat{\vect{x}}\cdot\vect{y}}\Qt(\vect{y})\vect{E}(\vect{y})\dd\vect{y}+O(|\vect{x}|^{-2}).
\end{equation*}
Comparison with \eqref{eq:far_field_expansion} gives \eqref{eq:exact_farfield}. Since $\tens{P}_{\hat{\vect{x}}}\hat{\vect{x}}=\vect{0}$ and $\tens{P}_{\hat{\vect{x}}}$ is symmetric, the far-field pattern satisfies $\hat{\vect{x}}\cdot\vect{E}^{\infty}=0$.
\end{proof}

To obtain a direct reconstruction, we replace the internal total field in \eqref{eq:exact_farfield} by the incident plane wave. Define the entrywise Fourier transform by
\begin{equation*}\label{eq:fourier_tensor_definition}
 \widehat{\Qt}(\vect{\xi})=\int_D e^{\ii\vect{\xi}\cdot\vect{y}}\Qt(\vect{y})\dd\vect{y}.
\end{equation*}
The Born far field can then be expressed directly in terms of $\widehat{\Qt}$.
\begin{theorem}\label{thm:born_tensor}
For the incident plane wave $\vect{E}^{\mathrm i}(\vect{x})=\vect{e}\,e^{\ii k\vect{d}\cdot\vect{x}}$, where $\vect{d}\in\Sph^2$ and $\vect{e}\cdot\vect{d}=0$, the Born approximation of the electric far-field pattern is
\begin{equation}\label{eq:born_tensor}
\vect{E}_b^\infty(\hat{\vect{x}};\vect{d},\vect{e};k)=\frac{k^2}{4\pi}\tens{P}_{\hat{\vect{x}}}\widehat{\Qt}\bigl(k(\vect{d}-\hat{\vect{x}})\bigr)\vect{e},\quad \hat{\vect{x}}\in\Sph^2.
\end{equation}
\end{theorem}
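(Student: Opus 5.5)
The plan is to take the exact far-field formula \eqref{eq:exact_farfield} from Corollary~\ref{cor:farfield} as the starting point. Recall that the first-order Born approximation means replacing the total field $\vect{E}(\vect{y};\vect{d},\vect{e};k)$ inside the integral by the incident field $\vect{E}^{\mathrm i}(\vect{y})=\vect{e}\,e^{\ii k\vect{d}\cdot\vect{y}}$. This replacement is equivalent to dropping the scattered-field term in the volume integral equation \eqref{eq:volume_integral_equation}, i.e.\ keeping only the first Neumann term of $(I-k^2V_Q)^{-1}\vect{E}^{\mathrm i}$. By definition, then,
\begin{equation*}
\vect{E}_b^\infty(\hat{\vect{x}};\vect{d},\vect{e};k)=\frac{k^2}{4\pi}\tens{P}_{\hat{\vect{x}}}\int_D e^{-\ii k\hat{\vect{x}}\cdot\vect{y}}\Qt(\vect{y})\vect{e}\,e^{\ii k\vect{d}\cdot\vect{y}}\dd\vect{y}.
\end{equation*}

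The key step is to combine the two exponentials into $e^{\ii k(\vect{d}-\hat{\vect{x}})\cdot\vect{y}}$. Because $\vect{e}$ is a constant vector, it can be pulled out of the integral on the right of the matrix $\Qt(\vect{y})$, giving
\begin{equation*}
\int_D e^{\ii k(\vect{d}-\hat{\vect{x}})\cdot\vect{y}}\Qt(\vect{y})\dd\vect{y}\,\vect{e}.
\end{equation*}
Comparing this with the entrywise definition of $\widehat{\Qt}$ at $\vect{\xi}=k(\vect{d}-\hat{\vect{x}})$ yields \eqref{eq:born_tensor}. The sign convention $e^{+\ii\vect{\xi}\cdot\vect{y}}$ in $\widehat{\Qt}$ is exactly what makes the argument $k(\vect{d}-\hat{\vect{x}})$ rather than its negative.

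The only point requiring care, though not a real obstacle, is justifying the integral manipulations. Since $\Qt\in L^\infty$ is supported in the bounded set $\overline D$, each entry of $\Qt$ is in $L^1(D)$, so $\widehat{\Qt}$ is well defined, indeed entire in $\vect{\xi}$. Matrix–vector multiplication commutes with the integral entrywise by linearity. I would also note, as a consistency check, that $\hat{\vect{x}}\cdot\vect{E}_b^\infty=0$ follows from $\tens{P}_{\hat{\vect{x}}}\hat{\vect{x}}=\vect{0}$, exactly as in Corollary~\ref{cor:farfield}. The accuracy of the approximation itself, namely the size of the remainder $\vect{E}^\infty-\vect{E}_b^\infty$, is a separate question deferred to the later modeling-error analysis.
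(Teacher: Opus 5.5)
Your proposal is correct and follows the paper's proof essentially verbatim: substitute $\vect{E}\approx\vect{E}^{\mathrm i}$ into \eqref{eq:exact_farfield}, merge the exponentials into $e^{\ii k(\vect{d}-\hat{\vect{x}})\cdot\vect{y}}$, pull the constant polarization $\vect{e}$ out of the integral, and recognize $\widehat{\Qt}\bigl(k(\vect{d}-\hat{\vect{x}})\bigr)$. Your additional remarks on the integrability of $\Qt$ and the transversality check are harmless extras and are not needed for the argument.
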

\begin{proof}
Substituting $\vect{E}\approx \vect{E}^{\rm i}$ into \eqref{eq:exact_farfield} gives
\begin{align*}
\vect{E}_b^\infty(\hat{\vect{x}};\vect{d},\vect{e};k)&=\frac{k^2}{4\pi}\tens{P}_{\hat{\vect{x}}}\int_D e^{-\ii k\hat{\vect{x}}\cdot\vect{y}}\Qt(\vect{y})\vect{e}\,e^{\ii k\vect{d}\cdot\vect{y}}\dd\vect{y}\\
&=\frac{k^2}{4\pi}\tens{P}_{\hat{\vect{x}}}\left[\int_D e^{\ii k(\vect{d}-\hat{\vect{x}})\cdot\vect{y}}\Qt(\vect{y})\dd\vect{y}\right]\vect{e}\\
&=\frac{k^2}{4\pi}\tens{P}_{\hat{\vect{x}}}\widehat{\Qt}\bigl(k(\vect{d}-\hat{\vect{x}})\bigr)\vect{e}.
\end{align*}
\end{proof}

The Born relation is obtained by replacing the internal total field by the incident field.  Because the numerical experiments below use data from the full Maxwell volume integral equation, we record here the resulting modeling remainder.  Substituting $\vect{E}=\vect{E}^{\mathrm i}+\vect{E}^{\mathrm s}$ into the exact far-field representation \eqref{eq:exact_farfield} gives
\begin{equation*}\label{eq:exact_born_decomp}
\vect{E}^{\infty}=\vect{E}_b^{\infty}+\mathcal E_{\mathrm{nl}}^{\infty},
\end{equation*}
where
\begin{equation}\label{eq:nonlinear_farfield}
\mathcal E_{\mathrm{nl}}^{\infty}(\hat{\vect{x}};\vect{d},\vect{e};k)=\frac{k^2}{4\pi}\tens{P}_{\hat{\vect{x}}}\int_D e^{-\ii k\hat{\vect{x}}\cdot\vect{y}}\Qt(\vect{y})\vect{E}^{\mathrm s}(\vect{y};\vect{d},\vect{e};k)\dd\vect{y}.
\end{equation}
The next proposition gives an a priori bound under a small-norm condition on the volume integral operator.

\begin{proposition}\label{prop:born_error}
Let $\Qt$ satisfy Assumption~\ref{ass:material} and suppose
\begin{equation*}
\rho_B=k^2\|V_Q\|_{L^2(D)^3\to L^2(D)^3}<1.
\end{equation*}
Then
\begin{equation*}
\|\vect{E}^{\mathrm s}\|_{L^2(D)^3}\leq\frac{\rho_B}{1-\rho_B}\|\vect{E}^{\mathrm i}\|_{L^2(D)^3},
\end{equation*}
and
\begin{equation}\label{eq:born_remainder_bound}
\|\mathcal E_{\mathrm{nl}}^{\infty}\|_{L^2(\Sph^2)^3}
\leq\frac{k^2|D|^{1/2}}{\sqrt{4\pi}}\|\Qt\|_{L^\infty(D)}
\frac{\rho_B}{1-\rho_B}\|\vect{E}^{\mathrm i}\|_{L^2(D)^3},
\end{equation}
where the contrast norm uses the induced matrix norm.
\end{proposition}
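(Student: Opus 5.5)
Both bounds follow from the volume integral equation \eqref{eq:volume_integral_equation} via a Neumann series, followed by a Cauchy--Schwarz estimate on the far-field integral \eqref{eq:nonlinear_farfield}.

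\textbf{Bound on the scattered field.} Restricted to $D$, \eqref{eq:volume_integral_equation} reads $(I-k^2V_Q)\vect{E}=\vect{E}^{\mathrm i}$ in $L^2(D)^3$. Since $\rho_B=k^2\|V_Q\|<1$, the Neumann series gives
\begin{equation*}
\|(I-k^2V_Q)^{-1}\|\le (1-\rho_B)^{-1}.
\end{equation*}
The solution coincides with the unique one from Theorem~\ref{prop:volume_integral}. Now write
\begin{equation*}
\vect{E}^{\mathrm s}=\vect{E}-\vect{E}^{\mathrm i}=k^2V_Q\vect{E}=k^2V_Q(I-k^2V_Q)^{-1}\vect{E}^{\mathrm i}.
\end{equation*}
Taking norms gives $\|\vect{E}^{\mathrm s}\|_{L^2(D)^3}\le \rho_B(1-\rho_B)^{-1}\|\vect{E}^{\mathrm i}\|_{L^2(D)^3}$, which is the first claim. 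No dissipativity is needed beyond what Theorem~\ref{prop:volume_integral} requires for identifying the solution.

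\textbf{Pointwise bound on the far-field remainder.} Fix $\hat{\vect{x}}\in\Sph^2$ and use \eqref{eq:nonlinear_farfield}. Three facts enter: $\tens{P}_{\hat{\vect{x}}}$ is an orthogonal projection, so its operator norm is at most $1$; $|e^{-\ii k\hat{\vect{x}}\cdot\vect{y}}|=1$; and $|\Qt(\vect{y})\vect{E}^{\mathrm s}(\vect{y})|\le\|\Qt\|_{L^\infty(D)}|\vect{E}^{\mathrm s}(\vect{y})|$ in the induced matrix norm. Together with Cauchy--Schwarz on $D$ they give
\begin{equation*}
|\mathcal E_{\mathrm{nl}}^{\infty}(\hat{\vect{x}})|\le\frac{k^2}{4\pi}\|\Qt\|_{L^\infty(D)}|D|^{1/2}\|\vect{E}^{\mathrm s}\|_{L^2(D)^3},
\end{equation*}
uniformly in $\hat{\vect{x}}$.

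\textbf{Integration over the sphere.} Squaring the pointwise bound and integrating over $\Sph^2$ produces a factor $|\Sph^2|^{1/2}=\sqrt{4\pi}$. Since $\sqrt{4\pi}/(4\pi)=1/\sqrt{4\pi}$, this yields
\begin{equation*}
\|\mathcal E_{\mathrm{nl}}^{\infty}\|_{L^2(\Sph^2)^3}\le\frac{k^2|D|^{1/2}}{\sqrt{4\pi}}\|\Qt\|_{L^\infty(D)}\|\vect{E}^{\mathrm s}\|_{L^2(D)^3}.
\end{equation*}
Substituting the first estimate gives \eqref{eq:born_remainder_bound}.

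\textbf{Expected difficulty.} No step is genuinely hard. The one point needing care is justifying that the representation \eqref{eq:exact_farfield}, and hence \eqref{eq:nonlinear_farfield}, applies with $\vect{E}^{\mathrm s}\in L^2(D)^3$. This holds because the integrand is an $L^2(D)$ function against a bounded kernel, so the integral is well defined.
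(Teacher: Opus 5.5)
Your proposal is correct and follows essentially the same route as the paper. Both arguments use a Neumann-series bound on $(I-k^2V_Q)^{-1}$ together with $\vect{E}^{\mathrm s}=k^2V_Q\vect{E}$ on $D$, then a pointwise Cauchy--Schwarz estimate of \eqref{eq:nonlinear_farfield} using $\|\tens{P}_{\hat{\vect{x}}}\|\le 1$, and finally integration over $\Sph^2$ to produce the factor $\sqrt{4\pi}/(4\pi)=1/\sqrt{4\pi}$.
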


\begin{proof}
The volume equation and the Neumann series give
\begin{equation*}
\vect{E}=(I-k^2V_Q)^{-1}\vect{E}^{\mathrm i},
\qquad
\|(I-k^2V_Q)^{-1}\|\leq\frac{1}{1-\rho_B}.
\end{equation*}
Since $\vect{E}^{\mathrm s}=k^2V_Q\vect{E}$ in $D$,
\begin{equation*}
\|\vect{E}^{\mathrm s}\|_{L^2(D)^3}
\leq\rho_B\|\vect{E}\|_{L^2(D)^3}
\leq\frac{\rho_B}{1-\rho_B}\|\vect{E}^{\mathrm i}\|_{L^2(D)^3}.
\end{equation*}
Using $\|\tens{P}_{\hat{\vect{x}}}\|=1$ and Cauchy--Schwarz in \eqref{eq:nonlinear_farfield},
\begin{equation*}
|\mathcal E_{\mathrm{nl}}^{\infty}(\hat{\vect{x}};\vect{d},\vect{e};k)|
\leq\frac{k^2}{4\pi}|D|^{1/2}\|\Qt\|_{L^\infty(D)}\|\vect{E}^{\mathrm s}\|_{L^2(D)^3}.
\end{equation*}
Integration over $\Sph^2$ and substitution of the scattered-field estimate yield \eqref{eq:born_remainder_bound}.
\end{proof}

Proposition~\ref{prop:born_error} isolates the discrepancy between the full Maxwell far field and the Born data used by the reconstruction.  This remainder is not inverted independently: in the discrete reconstruction it enters the stacked polarimetric data, is amplified by the local pseudoinverse, and is then propagated through the spectral cutoff.  Section~\ref{sec:discretization} incorporates these effects in the combined reconstruction error estimate.

Formula~\eqref{eq:born_tensor} relates the linearized Maxwell far field to the Fourier transform of the anisotropic contrast. However, the data do not provide $\widehat{\Qt}(\vect{\xi})$ directly. For each incidence--observation pair, they contain only its action on the admissible polarization $\vect{e}$, followed by the transverse projection $\tens{P}_{\hat{\vect{x}}}$. Recovering the tensor Fourier value at a fixed point $\vect{\xi}$ therefore requires several measurements associated with the same Fourier argument. The next section develops this polarimetric recovery.

\section{Polarimetric identifiability and conditioning}\label{sec:born}

Under the Born relation \eqref{eq:born_tensor}, the data at a Fourier point $\vect\xi$ are transverse input-output actions of $\widehat{\Qt}(\vect\xi)$.  We first characterize which Fourier points are accessible and then identify a minimal acquisition for a general tensor.

\begin{lemma}\label{lem:fourier_ball_coverage}
The set $\{k(\vect d-\hat{\vect x}):\vect d,\hat{\vect x}\in\Sph^2\}$ equals $\overline{B(0,2k)}$.  For $\vect\xi\in\overline{B(0,2k)}$, set $\vect p=\vect\xi/(2k)$ and choose $\vect s\in\Sph^2\cap\vect p^\perp$.  Then
\begin{equation}\label{eq:direction_construction}
 \vect d=\vect p+\sqrt{1-|\vect p|^2}\,\vect s,
 \qquad
 \hat{\vect x}=-\vect p+\sqrt{1-|\vect p|^2}\,\vect s
\end{equation}
belong to $\Sph^2$ and satisfy $k(\vect d-\hat{\vect x})=\vect\xi$.
\end{lemma}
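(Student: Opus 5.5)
We prove the set equality by double inclusion; the constructive formula \eqref{eq:direction_construction} supplies the harder inclusion.

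\emph{Accessible points lie in the closed ball.} For $\vect d,\hat{\vect x}\in\Sph^2$, the triangle inequality gives $|k(\vect d-\hat{\vect x})|\le k(|\vect d|+|\hat{\vect x}|)=2k$. Hence the set is contained in $\overline{B(0,2k)}$.

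\emph{Every point of the ball is accessible.} Fix $\vect\xi\in\overline{B(0,2k)}$ and put $\vect p=\vect\xi/(2k)$, so that $|\vect p|\le 1$ and the square root $\sqrt{1-|\vect p|^2}$ is real. The plane $\vect p^\perp$ is at least two-dimensional (all of $\R^3$ if $\vect p=0$), so it contains a unit vector $\vect s$.

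We then check the formulas directly. Since $\vect p\cdot\vect s=0$, Pythagoras gives
\begin{equation*}
|\vect d|^2=|\vect p|^2+(1-|\vect p|^2)|\vect s|^2=1,
\end{equation*}
and the identical computation applies to $\hat{\vect x}$, so both directions lie on $\Sph^2$. In the difference $\vect d-\hat{\vect x}$ the $\vect s$-terms cancel, leaving $\vect d-\hat{\vect x}=2\vect p$. Therefore $k(\vect d-\hat{\vect x})=2k\vect p=\vect\xi$. This proves the reverse inclusion and the equality of the two sets.

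The argument has no genuine obstacle. The only points needing care are the two degenerate cases:
\begin{itemize}
\item at $\vect p=0$, any $\vect s\in\Sph^2$ works, giving $\vect d=\hat{\vect x}=\vect s$;
\item at the boundary $|\vect p|=1$, the square root vanishes, forcing $\vect d=\vect p$ and $\hat{\vect x}=-\vect p$ (backscattering).
\end{itemize}
Both cases are covered by the same formulas.
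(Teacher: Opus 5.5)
Your proposal is correct and follows the same approach as the paper. Both bound $|k(\vect d-\hat{\vect x})|\le 2k$ for one inclusion, and for the other verify \eqref{eq:direction_construction} directly: orthogonality gives unit length, and subtraction gives $2k\vect p=\vect\xi$.
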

\begin{proof}
Orthogonality of $\vect p$ and $\vect s$ gives unit length, and subtraction in \eqref{eq:direction_construction} gives $2k\vect p=\vect\xi$.  Conversely, $|k(\vect d-\hat{\vect x})|\le 2k$.
\end{proof}

The freedom in choosing $\vect{s}$ allows us to combine different polarimetric measurements of the same tensor Fourier value, leading to the linear system below.

\subsection{Polarimetric linear system}
Let $\mathcal T\subset\C^{3\times3}$ be a tensor class of dimension $m$ with Frobenius-orthonormal basis $\{\Tt_r\}_{r=1}^m$.  At a fixed Fourier point write
\begin{equation}\label{eq:tensor_basis}
 \widehat{\Qt}(\vect\xi)=\sum_{r=1}^m c_r(\vect\xi)\Tt_r.
\end{equation}
For $J$ pairs $(\vect d_j,\hat{\vect x}_j)$ satisfying $k(\vect d_j-\hat{\vect x}_j)=\vect\xi$, let $\tens E_j=(\vect e_{j,1},\vect e_{j,2})$ be an orthonormal basis of $\vect d_j^\perp$.  Define the normalized $3\times2$ data block
\begin{equation}\label{eq:born_data_block}
 \Bt_j=\frac{4\pi}{k^2}
 \bigl(\vect E_b^\infty(\hat{\vect x}_j;\vect d_j,\vect e_{j,1};k),
       \vect E_b^\infty(\hat{\vect x}_j;\vect d_j,\vect e_{j,2};k)\bigr).
\end{equation}
Then
\begin{equation*}\label{eq:polarized_data_block}
 \Bt_j=\tens P_{\hat{\vect x}_j}\widehat{\Qt}(\vect\xi)\tens E_j.
\end{equation*}
With columnwise vectorization, define 
\begin{equation}\label{equ:defineofgj}
g_j=\vecop(\Bt_j)
\end{equation}
and
\begin{equation}\label{eq:measurement_block}
 [\Mt_j]_{:,r}=\vecop\!\left(\tens P_{\hat{\vect x}_j}\Tt_r\tens E_j\right).
\end{equation}
Stacking the blocks gives
\begin{equation}\label{eq:polarimetric_system}
 g=\Mt c,\qquad
 \Mt=\begin{pmatrix}\Mt_1\\ \vdots\\ \Mt_J\end{pmatrix}\in\C^{6J\times m}.
\end{equation}
For the general class $\mathcal T=\C^{3\times3}$, $m=9$; for symmetric tensors $m=6$; and for isotropic tensors $m=1$.

The following proposition gives the polarimetric reconstruction and bounds its error in terms of the data perturbation and the smallest singular value of the measurement matrix.

\begin{proposition}\label{prop:tensor_recovery}
If $\rank\Mt=m$, then $c=\Mt^\dagger g$. If $g^\delta=g+\eta$, the reconstruction
$\widehat{\Qt}^{\,\delta}(\vect{\xi})=\sum_{r=1}^{m}[\Mt^\dagger g^\delta]_r\Tt_r$
satisfies
\begin{equation*}\label{eq:polarimetric_stability}
 \|\widehat{\Qt}^{\,\delta}(\vect\xi)-\widehat{\Qt}(\vect\xi)\|_{\Frob}
 \le \frac{\|\eta\|_2}{\sigma_{\min}(\Mt)}.
\end{equation*}
\end{proposition}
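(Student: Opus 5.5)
The plan is to treat the claim as a standard full-column-rank least-squares statement and then transfer the coefficient error to the tensor through the Frobenius-orthonormality of $\{\Tt_r\}$. There are two steps: the exact recovery formula, and the stability bound.

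\emph{Exact recovery.} By construction \eqref{eq:polarimetric_system}, the noiseless data satisfy $g=\Mt c$. This uses linearity of $\vecop$ and of $\Tt\mapsto\tens P_{\hat{\vect x}_j}\Tt\tens E_j$ applied to the expansion \eqref{eq:tensor_basis}. Since $\rank\Mt=m$, the matrix $\Mt$ has full column rank. Hence $\Mt^*\Mt$ is invertible and $\Mt^\dagger=(\Mt^*\Mt)^{-1}\Mt^*$ is a left inverse, $\Mt^\dagger\Mt=\Id_m$. It follows that $\Mt^\dagger g=\Mt^\dagger\Mt c=c$.

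\emph{Stability.} Set $c^\delta=\Mt^\dagger g^\delta$, so that $c^\delta-c=\Mt^\dagger\eta$. Using the SVD $\Mt=U\Sigma V^*$ with $m$ positive singular values, we have $\|\Mt^\dagger\|_2=1/\sigma_{\min}(\Mt)$. Therefore $\|c^\delta-c\|_2\le\|\eta\|_2/\sigma_{\min}(\Mt)$. The tensor error is $\widehat{\Qt}^{\,\delta}(\vect\xi)-\widehat{\Qt}(\vect\xi)=\sum_r(c^\delta_r-c_r)\Tt_r$. Since the $\Tt_r$ are orthonormal in the Frobenius inner product $\langle \tens A,\tens B\rangle=\operatorname{tr}(\tens B^*\tens A)$, Parseval gives $\|\sum_r a_r\Tt_r\|_{\Frob}=\|a\|_2$, and the bound follows.

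No step is a genuine obstacle. The only point needing care is that the reconstruction must be taken in the coefficient basis of the class $\mathcal T$, so that the Frobenius norm of the tensor error equals the Euclidean norm of the coefficient error. We also need $\widehat{\Qt}(\vect\xi)\in\mathcal T$, which is implicit in \eqref{eq:tensor_basis}. With both in place, the isometry $a\mapsto\sum_r a_r\Tt_r$ from $\C^m$ to $(\mathcal T,\|\cdot\|_{\Frob})$ transfers the estimate exactly, with no extra constant.
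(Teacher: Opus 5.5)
Your proposal is correct and follows the paper's proof. Full column rank gives $\Mt^\dagger\Mt=I_m$ and $\|\Mt^\dagger\|_2=\sigma_{\min}(\Mt)^{-1}$, and the Frobenius-orthonormality of $\{\Tt_r\}$ turns the coefficient bound into the tensor bound; you simply spell out in more detail what the paper states in two lines.
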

\begin{proof}
Because the columns represent a Frobenius-orthonormal tensor basis, the coefficient Euclidean norm equals the tensor Frobenius norm.  Full column rank gives $\Mt^\dagger\Mt=I_m$ and $\|\Mt^\dagger\|_2=\sigma_{\min}(\Mt)^{-1}$.
\end{proof}

\subsection{A minimal three-configuration design}
We now use the construction in Lemma~\ref{lem:fourier_ball_coverage} to select a minimal set of measurement configurations satisfying the full-rank condition in Proposition~\ref{prop:tensor_recovery}.

\begin{theorem}\label{thm:three_configuration}
Let $0<|\vect{\xi}|<2k$, $\rho=|\vect{\xi}|/(2k)$, and $\vect{n}=\vect{\xi}/|\vect{\xi}|$. Choose an orthonormal basis $\vect{u},\vect{v}$ of $\vect{n}^{\perp}$ and set
\begin{equation*}\label{eq:three_angles}
\theta_j=\frac{2\pi(j-1)}{3},\qquad \vect{s}_j=\cos\theta_j\,\vect{u}+\sin\theta_j\,\vect{v},\qquad j=1,2,3.
\end{equation*}
For the configurations
\begin{equation*}\label{eq:three_directions}
\vect{d}_j=\rho\vect{n}+\sqrt{1-\rho^2}\,\vect{s}_j,\qquad \hat{\vect{x}}_j=-\rho\vect{n}+\sqrt{1-\rho^2}\,\vect{s}_j,
\end{equation*}
each using two orthogonal incident polarizations, the polarimetric matrix for $\mathcal T=\C^{3\times3}$ satisfies
\begin{equation*}\label{eq:gram_determinant}
\det(\Mt^*\Mt)=\frac{3^9}{2^9}\rho^2(1-\rho^2)^6(9\rho^4+2\rho^2+1)^2>0,\qquad \rank\Mt=9.
\end{equation*}
Three configurations are minimal: any choice with $J\leq2$ satisfies $\rank\Mt\leq4J<9$.
\end{theorem}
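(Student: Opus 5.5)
The plan is to turn the rank statement into an eigenvalue computation for an explicit $9\times9$ real symmetric matrix, and to block-diagonalize that matrix using the threefold rotational symmetry of the design about $\vect n$.

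\textbf{Minimality.} I would dispose of this first. Since $\hat{\vect x}_j\perp\tens P_{\hat{\vect x}_j}\Tt\tens E_j$ column by column, each block $\tens P_{\hat{\vect x}_j}\widehat{\Qt}\tens E_j$ takes values in the space of $3\times2$ matrices with columns in $\hat{\vect x}_j^\perp$. That space is isomorphic to $\C^{2\times2}$ and has dimension $4$, so $\rank\Mt_j\le4$. Stacking gives $\rank\Mt\le\sum_j\rank\Mt_j\le4J<9$ for $J\le2$.

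\textbf{Identifying the Gram matrix.} Because $\{\Tt_r\}$ is Frobenius-orthonormal, $c^*\Mt^*\Mt c=\sum_j\|\tens P_{\hat{\vect x}_j}\Tt\tens E_j\|_\Frob^2$ with $\Tt=\sum_r c_r\Tt_r$. This equals $\sum_j\operatorname{tr}\bigl(\Tt^*\tens P_j\Tt\tens Q_j\bigr)$, where $\tens P_j=\Id-\hat{\vect x}_j\hat{\vect x}_j^\top$ and $\tens Q_j=\tens E_j\tens E_j^*=\Id-\vect d_j\vect d_j^\top$. Hence, up to a unitary change of basis (which leaves the determinant unchanged), $\Mt^*\Mt$ is the operator $\Tt\mapsto\sum_j\tens P_j\Tt\tens Q_j$. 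In vectorized form this is
\begin{equation*}
\mathsf G=\sum_{j=1}^3\tens Q_j\otimes\tens P_j
=3\,\Id\otimes\Id-\sum_j\vect d_j\vect d_j^\top\otimes\Id-\Id\otimes\sum_j\hat{\vect x}_j\hat{\vect x}_j^\top+\sum_j(\vect d_j\vect d_j^\top)\otimes(\hat{\vect x}_j\hat{\vect x}_j^\top).
\end{equation*}

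\textbf{Moment sums.} Using $\sum_j\vect s_j=0$ and $\sum_j\vect s_j\vect s_j^\top=\tfrac32(\vect u\vect u^\top+\vect v\vect v^\top)$, the quadratic sums become
\begin{equation*}
\sum_j\vect d_j\vect d_j^\top=\sum_j\hat{\vect x}_j\hat{\vect x}_j^\top=3\rho^2\vect n\vect n^\top+\tfrac32(1-\rho^2)(\Id-\vect n\vect n^\top).
\end{equation*}
The quartic term requires the third and fourth moments of $\{\vect s_j\}$. For three equiangular directions the third moment does \emph{not} vanish; it is proportional to $\cos3\theta$, $\sin3\theta$. This is exactly what makes the computation nontrivial.

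\textbf{Block-diagonalization.} I would work in the complex basis $\vect e_\pm=(\vect u\pm\ii\vect v)/\sqrt2$, $\vect n$, which carry charges $\pm1,0$ under rotation about $\vect n$. The design is invariant under rotation by $2\pi/3$, so $\mathsf G$ commutes with that rotation and preserves the three subspaces of tensor products $\vect a\otimes\vect b$ whose total charge is $0,1,-1\pmod 3$:
\begin{itemize}[nosep]
\item charge $0$: $\{\vect n\otimes\vect n,\ \vect e_+\otimes\vect e_-,\ \vect e_-\otimes\vect e_+\}$;
\item charge $1$: $\{\vect e_+\otimes\vect n,\ \vect n\otimes\vect e_+,\ \vect e_-\otimes\vect e_-\}$;
\item charge $-1$: the complex conjugate of the charge-$1$ block.
\end{itemize}
The coupling of $\vect e_-\otimes\vect e_-$ (charge $-2\equiv1$) into the charge-$1$ block is the footprint of the nonzero third moment. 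Thus $\det\mathsf G$ factors as the product of three $3\times3$ determinants, and the last two are equal. I would compute each entry as a polynomial in $\rho$ and $\sqrt{1-\rho^2}$, and expect
\begin{itemize}[nosep]
\item the charge-$0$ block to give a factor like $c\,\rho^2(1-\rho^2)^2$;
\item each of the charge-$\pm1$ blocks to give $c'(1-\rho^2)^2(9\rho^4+2\rho^2+1)$.
\end{itemize}
Assembling these should reproduce $\frac{3^9}{2^9}\rho^2(1-\rho^2)^6(9\rho^4+2\rho^2+1)^2$. Positivity on $0<\rho<1$ is then immediate, since $9\rho^4+2\rho^2+1>0$, and this gives $\rank\Mt=9$.

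\textbf{Main obstacle.} The hard step is the bookkeeping of the quartic term in the charge-$\pm1$ blocks. The odd powers of $\sqrt{1-\rho^2}$ coming from the third moments must cancel in the determinant, and the constants must be tracked exactly to get $3^9/2^9$. As a safeguard, I would check each block determinant at $\rho\to0$ (expect the vanishing from the $\rho^2$ factor) and at $\rho\to1$ (expect the $(1-\rho^2)$ powers).
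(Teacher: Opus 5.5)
Your proposal is correct, and for the sufficiency part it takes a genuinely different route from the paper. The minimality argument and the identification of $\Mt^*\Mt$ with $\sum_j\tens P_{\vect d_j}\otimes\tens P_{\hat{\vect x}_j}$ coincide with the paper's.

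The paper evaluates this Gram matrix entry by entry in real Cartesian coordinates $(\vect u,\vect v,\vect n)$, using a table of trigonometric moments up to order four. It then passes, by inspection, to a real orthonormal basis of symmetric and antisymmetric combinations ($\tens F_{11}\pm\tens F_{22}$, $\tens F_{33}$, $\tens F_{ab}\pm\tens F_{ba}$). In that basis the matrix is $\operatorname{diag}(\tens B_0,\tens B_+,\tens B_-,3\rho^2,\tfrac32t^2,\tfrac32t^2)$ with $t=\sqrt{1-\rho^2}$.

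Your helical basis explains a priori why a block structure exists, and it makes the entries easy to evaluate. Writing $\vect s_j=(e^{-\ii\theta_j}\vect e_++e^{\ii\theta_j}\vect e_-)/\sqrt2$, each entry reduces to sums $\sum_j e^{\ii m\theta_j}\in\{0,3\}$, so the bookkeeping you anticipate is light. The odd powers of $t$ appear only in the couplings of $\vect e_\mp\vect e_\mp^\top$ to $\vect e_\pm\vect n^\top$ and $\vect n\vect e_\pm^\top$, hence only squared in each $3\times3$ determinant.

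The cost is a coarser splitting into $3\times3$ blocks. The paper's finer splitting comes from a second symmetry you did not use. The reflection $\tens S=\Id-2\vect n\vect n^\top$ maps $\vect d_j$ to $\hat{\vect x}_j$, so $\Tt\mapsto\tens S\Tt^\top\tens S$ commutes with both the Gram operator and the rotation. It therefore splits off from each charge block a one-dimensional eigenspace:
\begin{itemize}
\item in the charge-$0$ block, the span of $\vect e_+\vect e_-^\top-\vect e_-\vect e_+^\top$, with eigenvalue $3\rho^2$;
\item in the charge-$\pm1$ blocks, the span of $\vect e_\pm\vect n^\top+\vect n\vect e_\pm^\top$, with eigenvalue $\tfrac32(1-\rho^2)$.
\end{itemize}
What remains are $2\times2$ blocks equivalent to $\tens B_0$ and $\tens B_\pm$. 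The paper reuses this refinement to factor the characteristic polynomial in Proposition~\ref{prop:center_boundary_conditioning}. Your symmetry argument, by contrast, carries over directly to $J$ equiangular configurations, with charges taken modulo $J$.

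Finally, your proposal only predicts the block determinants rather than computing them. Carrying out the computation confirms your predictions exactly:
\begin{itemize}
\item the charge-$0$ block has determinant $\tfrac{27}{2}\rho^2(1-\rho^2)^2$;
\item each charge-$\pm1$ block has determinant $\tfrac{27}{16}(1-\rho^2)^2(9\rho^4+2\rho^2+1)$.
\end{itemize}
Their product is the stated $\tfrac{3^9}{2^9}\rho^2(1-\rho^2)^6(9\rho^4+2\rho^2+1)^2$.
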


\begin{proof}
Each configuration gives $\rank\Mt_j\leq4$, so $\rank\Mt\leq4J<9$ when $J\leq2$.

For sufficiency, choose the standard Frobenius tensor basis. Columnwise vectorization gives $\Mt_j=\tens{E}_j^\top\otimes\tens{P}_{\hat{\vect{x}}_j}$ and hence
\begin{equation*}
\Mt^*\Mt=\sum_{j=1}^{3}\tens{P}_{\vect{d}_j}\otimes\tens{P}_{\hat{\vect{x}}_j}.
\end{equation*}
In coordinates aligned with $\vect{u},\vect{v},\vect{n}$, the calculation in Appendix~\ref{app:three_configuration} gives the stated determinant, which is positive for $0<\rho<1$. Thus $\rank\Mt=9$. Any other Frobenius-orthonormal tensor basis gives a unitarily equivalent Gram matrix and preserves its determinant.
\end{proof}

We next examine the conditioning of this three-configuration design near the center and boundary of the Fourier ball.

\begin{proposition}\label{prop:center_boundary_conditioning}
For the three-configuration design in Theorem~\ref{thm:three_configuration},
\begin{equation*}
\sigma_{\min}(\Mt(\rho))\sim\sqrt{3}\,\rho
\quad\text{as }\rho\to0^+,
\quad
\sigma_{\min}(\Mt(\rho))\sim\sqrt{\frac32}\,(1-\rho^2)
\quad\text{as }\rho\to1^-.
\end{equation*}
The degeneracy at $\vect{\xi}=\vect{0}$ can be avoided by choosing $\vect{d}_j=\hat{\vect{x}}_j=\vect{a}_j$, where $\{\vect{a}_1,\vect{a}_2,\vect{a}_3\}$ is an orthonormal basis of $\R^3$. With both orthogonal incident polarizations in each configuration, $\rank\Mt=9$ and $\det(\Mt^*\Mt)=8$.
At $|\vect{\xi}|=2k$, however, every admissible configuration satisfies $\vect{d}=\vect{\xi}/|\vect{\xi}|$ and $\hat{\vect{x}}=-\vect{d}$, so $\rank\Mt\leq4$ regardless of the number of configurations. Thus the boundary degeneracy cannot be removed by changing the measurement configurations.
\end{proposition}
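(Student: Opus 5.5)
The plan is to work throughout with the Gram quadratic form from the proof of Theorem~\ref{thm:three_configuration}. For a tensor $\Tt$,
\begin{equation*}
c^*\Mt^*\Mt c=\sum_{j}\|\tens P_{\hat{\vect x}_j}\Tt\,\tens P_{\vect d_j}\|_{\Frob}^2,
\qquad
\Mt^*\Mt=\sum_j\tens P_{\vect d_j}\otimes\tens P_{\hat{\vect x}_j},
\end{equation*}
which is a sum of Kronecker products of real orthogonal projectors, and $\sigma_{\min}(\Mt)^2=\lambda_{\min}(\Mt^*\Mt)$. I will use coordinates $(\vect u,\vect v,\vect n)$ throughout.

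\textbf{Center design and boundary degeneracy.} These two parts are elementary and I will do them first.

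For the center design $\vect d_j=\hat{\vect x}_j=\vect a_j$, I work in the basis $\{\vect a_p\}$. There each $\tens P_{\vect a_j}=\operatorname{diag}$ with a zero in slot $j$ and ones elsewhere, so $\Mt^*\Mt$ is diagonal on the basis $\vect a_p\vect a_q^\top$. Its entry counts the indices $j$ with $j\ne p$ and $j\ne q$. This count is $2$ for $p=q$ (three entries) and $1$ for $p\ne q$ (six entries). Hence $\det=2^3=8$ and the rank is $9$.

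At $|\vect\xi|=2k$, Lemma~\ref{lem:fourier_ball_coverage} gives $|\vect d-\hat{\vect x}|=2$. This forces $\vect d=-\hat{\vect x}=\vect n$, so every configuration produces the same block $\Mt_j$. The stacked matrix therefore has rank equal to $\rank\Mt_1\le4$.

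\textbf{Asymptotics as $\rho\to0^+$.} I set $G(\rho)=\Mt(\rho)^*\Mt(\rho)$, which is analytic in $\rho$ (and in $\sqrt{1-\rho^2}$). At $\rho=0$ we have $\vect d_j=\hat{\vect x}_j=\vect s_j$.

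First I will show that $\ker G(0)$ is one-dimensional, spanned by the skew tensor $\vect z\mapsto\vect n\times\vect z$. The reason is that for $\vect z\perp\vect s_j$ the vector $\vect n\times\vect z$ is parallel to $\vect s_j$ and is annihilated by $\tens P_{\vect s_j}$. Next I check that the remaining eight eigenvalues of $G(0)$ are positive.

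Since the eigenvalue near zero is simple, I can write
\begin{equation*}
\lambda_{\min}(\rho)=\frac{\det G(\rho)}{\prod_{i=1}^8\lambda_i(\rho)}
=\frac{\det G(\rho)}{\prod_{i=1}^8\lambda_i(0)}\,\bigl(1+o(1)\bigr).
\end{equation*}
The theorem gives $\det G(\rho)\sim(3^9/2^9)\rho^2$. So it suffices to prove $\prod_{i=1}^8\lambda_i(0)=3^8/2^9$, which yields $\lambda_{\min}\sim3\rho^2$ and hence $\sigma_{\min}\sim\sqrt3\,\rho$.

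To compute that product, I will use the symmetry of the design. The $C_3$ rotation about $\vect n$ (together with reflections) commutes with $G$. Decomposing $\C^{3\times3}$ into Fourier modes of this rotation makes $G(0)$ block diagonal with small blocks, and these blocks can be diagonalized explicitly.

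\textbf{Asymptotics as $\rho\to1^-$.} This is the main obstacle. With $\varepsilon=\sqrt{1-\rho^2}$, $G(\rho)\to3\,\tens P_{\vect n}\otimes\tens P_{\vect n}$, whose kernel is five-dimensional. The determinant therefore only gives the product of five small eigenvalues, not the smallest one.

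I plan to use degenerate perturbation theory. I expand
\begin{equation*}
\tens P_{\vect d_j}=\tens P_{\vect n}+\varepsilon A_j+\varepsilon^2B_j+\cdots,
\end{equation*}
and similarly for $\tens P_{\hat{\vect x}_j}$, where $A_j$ is built from $\vect n\vect s_j^\top+\vect s_j\vect n^\top$. I then compute the effective operator on $\ker(\tens P_{\vect n}\otimes\tens P_{\vect n})$. This operator is the second-order term minus the Schur-complement coupling through the range, divided by $3$.

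The $C_3$ symmetry should again split this $5\times5$ effective operator into one-dimensional and two-dimensional blocks. I expect the smallest effective eigenvalue to be $\tfrac32\varepsilon^4$, which gives $\sigma_{\min}\sim\sqrt{3/2}\,(1-\rho^2)$. As a consistency check, the product of the five effective eigenvalues must reproduce the factor $(1-\rho^2)^6$ and the constant in $\det G(\rho)$ as $\rho\to1$.

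If the perturbative bookkeeping becomes unwieldy, I will fall back on computing each $C_3$ block of $G(\rho)$ in closed form for all $\rho$. I would then read off the smallest eigenvalue from the characteristic polynomials of these blocks, which are of low degree.
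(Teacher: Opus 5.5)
\textbf{Overall.} Your arguments for the center design, the boundary, and $\rho\to0^+$ are sound. For $\rho\to0^+$ you take a different route from the paper. The paper reads off $3\rho^2$ directly as the exact $1\times1$ block $b-c$ in \eqref{eq:appendix_gram_blocks}, which is spanned by the skew tensor you identify. You instead divide the determinant of Theorem~\ref{thm:three_configuration} by the eight nonzero eigenvalues of $G(0)$. These are $\tfrac32,3,\tfrac34,\tfrac32,\tfrac34,\tfrac32,\tfrac32,\tfrac32$, whose product is indeed $3^8/2^9$, so your route works and needs only the spectrum at $\rho=0$. At the boundary, the blocks are identical only up to the choice of polarization basis of $\vect d^\perp$: $\Mt_j=(\tens U_j^\top\otimes\Id)\Mt_1$ with $\tens U_j$ unitary. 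They therefore share a kernel, and the rank bound stands.

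\textbf{The gap at $\rho\to1^-$.} Your primary perturbative route fails there. Test the unit tensor $\vect n\vect n^\top$. Its Rayleigh quotient is $\sum_j|\tens P_{\hat{\vect x}_j}\vect n|^2|\tens P_{\vect d_j}\vect n|^2=3\varepsilon^4$, so $\lambda_{\min}=O(\varepsilon^4)$. The effective operator you describe, the second-order term minus the coupling through the range, is an $O(\varepsilon^2)$ object. In fact the $O(\varepsilon)$ term of $G$ depends on the $A_j$ only through $\sum_jA_j=0$, since $\sum_j\vect s_j=0$. So there is no coupling through the range at that order. Let $P_K$ be the orthogonal projection onto $K=\ker(\tens P_{\vect n}\otimes\tens P_{\vect n})$. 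The reduced operator is then $P_KG_2P_K$, which is positive semidefinite and has zero diagonal entry at $\vect n\vect n^\top$, hence annihilates it. Its eigenvalues are $\tfrac92,\tfrac92,\tfrac32,\tfrac32,0$. Your computation therefore cannot produce $\tfrac32\varepsilon^4$; it only shows $\lambda_{\min}=o(\varepsilon^2)$. Your own consistency check would expose this: five $O(\varepsilon^2)$ eigenvalues give $\varepsilon^{10}$, not $\varepsilon^{12}$.

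\textbf{How to repair it.} The constant is a fourth-order effect. Since $G_2\vect n\vect n^\top=-\tfrac32\tens P_{\vect n}$, the tensor $\vect n\vect n^\top$ couples to the range at order $\varepsilon^2$. This gives $3\varepsilon^4-\tfrac13\cdot\tfrac92\varepsilon^4=\tfrac32\varepsilon^4$, which is the Schur complement $3t^4/(1+\rho^4)$ of $\tens B_0$ with $t=\varepsilon$. Alternatively, reuse your determinant trick. $G$ is analytic in $\varepsilon$, four eigenvalues tend to $3$, and four are asymptotic to $\tfrac92\varepsilon^2,\tfrac92\varepsilon^2,\tfrac32\varepsilon^2,\tfrac32\varepsilon^2$. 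Since $\det G\sim\tfrac{3^{11}}{32}\varepsilon^{12}$, this gives $\lambda_{\min}\sim\frac{3^{11}/32}{81\cdot729/16}\,\varepsilon^4=\tfrac32\varepsilon^4$. Your stated fallback, computing the symmetry blocks in closed form for all $\rho$, is precisely the paper's proof.
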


\begin{proof}
The center and boundary rank statements follow directly from the corresponding configurations. It remains to establish the two asymptotic formulas. The block decomposition in \eqref{eq:appendix_gram_blocks} gives
\begin{align*}
\det(\lambda\Id-\Mt(\rho)^*\Mt(\rho))
&=\frac{1}{512}(\lambda-3\rho^2)(2\lambda+3\rho^2-3)^2\\
&\quad\times\left[2\lambda^2-(9\rho^4-12\rho^2+9)\lambda+9(1-\rho^2)^2\right]\\
&\quad\times\left[8\lambda^2+(18\rho^4-24\rho^2-18)\lambda
+9(1-\rho^2)(9\rho^4+2\rho^2+1)\right]^2.
\end{align*}
At $\rho=0$, only the root $3\rho^2$ tends to zero; all other roots have positive limits. Therefore
\begin{equation*}
\lambda_{\min}(\Mt(\rho)^*\Mt(\rho))\sim 3\rho^2
\quad\text{as }\rho\to0^+.
\end{equation*}
As $\rho\to1^-$, the first quadratic factor has a root tending to $3$, so its smaller root is asymptotic to $\frac32(1-\rho^2)^2$. The small roots of the second quadratic factor are asymptotic to $\frac92(1-\rho^2)$, while the remaining roots are $3\rho^2$ and $\frac32(1-\rho^2)$. Consequently,
\begin{equation*}
\lambda_{\min}(\Mt(\rho)^*\Mt(\rho))\sim\frac32(1-\rho^2)^2
\quad\text{as }\rho\to1^-.
\end{equation*}
Taking square roots yields the stated asymptotics for $\sigma_{\min}(\Mt(\rho))$.
\end{proof}

The three-configuration design guarantees unique recovery at nonzero interior Fourier points, but small singular values may amplify data errors near the center and boundary. The center degeneracy can be avoided by the three-axis configuration, whereas the boundary degeneracy is unavoidable. We therefore use the smallest singular value to guide the selection of measurement configurations in the discrete reconstruction in Section~\ref{sec:discretization}.

\section{Low-rank inversion on the Fourier ball}\label{sec:gpswf}

After recovering the tensor Fourier values, we reconstruct the contrast by inverting a restricted Fourier operator. Let $\supp\Qt\subset B(0,R)$, write $B_1=B(0,1)$, and set $\Qt_R(\vect{z})=\Qt(R\vect{z})$. With $\vect{\xi}=2k\vect{p}$ and $C=2kR$, a change of variables gives
\begin{equation}\label{eq:scaling}
R^{-3}\widehat{\Qt}(2k\vect{p})=\int_{B_1}e^{\ii C\vect{p}\cdot\vect{z}}\Qt_R(\vect{z})\dd\vect{z},\qquad \vect{p}\in B_1.
\end{equation}
Define
\begin{equation}\label{eq:restricted_fourier}
(\F_C f)(\vect{p})=\int_{B_1}e^{\ii C\vect{p}\cdot\vect{z}}f(\vect{z})\dd\vect{z},\qquad \vect{p}\in B_1.
\end{equation}
The operator acts componentwise on matrix-valued functions, so \eqref{eq:scaling} reads $R^{-3}\widehat{\Qt}(2k\,\cdot)=\F_C\Qt_R$. Since $\F_C$ is compact, its inversion requires regularization based on its spectral structure.

The eigenfunctions of $\F_C$ are the three-dimensional ball generalized prolate spheroidal wave functions (GPSWFs) \cite{Greengard2024,Slepian1964,ZhangLiWangZhang2020}. They form an orthonormal basis of $L^2(B_1)$ and satisfy
\begin{equation}\label{eq:gpswf_eigen}
\F_C\psi_{\ell n m}=\alpha_{\ell n}(C)\psi_{\ell n m}.
\end{equation}
In spherical coordinates, these functions have the separated form
\begin{equation}\label{eq:gpswf_separation}
\psi_{\ell n m}(r,\vect{\omega};C)=R_{\ell n}(r;C)Y_\ell^m(\vect{\omega}),
\end{equation}
where $\ell,n\geq0$, $-\ell\leq m\leq\ell$, and $Y_\ell^m$ are orthonormal spherical harmonics on $\Sph^2$. The radial functions are normalized in $L^2((0,1),r^2\dd r)$, and the eigenvalues $\alpha_{\ell n}(C)$ are independent of $m$. Their numerical construction is described in Appendix~\ref{sec:gpswf_implementation}.

We now expand the contrast and its recovered Fourier data in this basis. Writing $W(\vect{p})=R^{-3}\widehat{\Qt}(2k\vect{p})$, we have
\begin{equation}\label{eq:modal_expansion}
\Qt_R=\sum_\nu\tens{Q}_\nu\psi_\nu,\qquad W=\sum_\nu W_\nu\psi_\nu,
\end{equation}
where $\nu$ represents $(\ell,n,m)$ and the coefficients $\tens{Q}_\nu$ and $W_\nu$ are $3\times3$ matrices. Since $W=\F_C\Qt_R$, equation~\eqref{eq:gpswf_eigen} gives $W_\nu=\alpha_\nu(C)\tens{Q}_\nu$. Thus the spatial coefficients are recovered by
\begin{equation}\label{eq:modal_inverse}
\tens{Q}_\nu=\frac{W_\nu}{\alpha_\nu(C)},\qquad \alpha_\nu(C)\neq0.
\end{equation}

Division by small eigenvalues in \eqref{eq:modal_inverse} amplifies errors in the recovered Fourier data. We therefore retain only the modes in
\begin{equation*}
\mathcal J_\epsilon=\{\nu:|\alpha_\nu(C)|>\epsilon\},\qquad \epsilon>0.
\end{equation*}
For perturbed Fourier data $W^\delta$, define
\begin{equation}\label{eq:Q_cutoff}
\Qt_{R,\epsilon}^\delta=\sum_{\nu\in\mathcal J_\epsilon}\frac{W_\nu^\delta}{\alpha_\nu(C)}\psi_\nu,\qquad W_\nu^\delta=\int_{B_1}W^\delta(\vect{p})\overline{\psi_\nu(\vect{p})}\dd\vect{p}.
\end{equation}
The reconstruction in the original spatial coordinates is then $\Qt_\epsilon^\delta(\vect{x})=\Qt_{R,\epsilon}^\delta(\vect{x}/R).$ The following estimate separates the effect of errors in the recovered Fourier data from the error caused by discarding the unstable modes.

\begin{theorem}\label{thm:spectral_cutoff}
For the expansions in \eqref{eq:modal_expansion}, suppose that
\begin{equation*}
\|W^\delta-W\|_{L^2(B_1;\C^{3\times3})}\leq\delta_F.
\end{equation*}
Then the reconstruction \eqref{eq:Q_cutoff} satisfies
\begin{equation}\label{eq:cutoff_error}
\|\Qt_{R,\epsilon}^\delta-\Qt_R\|_{L^2(B_1;\C^{3\times3})}
\leq\frac{\delta_F}{\epsilon}
+\left(\sum_{\nu\notin\mathcal J_\epsilon}\|\tens{Q}_\nu\|_{\Frob}^2\right)^{1/2}.
\end{equation}
\end{theorem}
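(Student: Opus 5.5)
The plan is a standard bias--variance splitting in the GPSWF basis, using orthonormality of $\{\psi_\nu\}$ in $L^2(B_1)$ applied componentwise to matrix-valued functions. We introduce the noise-free truncated reconstruction
\begin{equation*}
\Qt_{R,\epsilon}=\sum_{\nu\in\mathcal J_\epsilon}\frac{W_\nu}{\alpha_\nu(C)}\psi_\nu=\sum_{\nu\in\mathcal J_\epsilon}\tens{Q}_\nu\psi_\nu,
\end{equation*}
where the second equality uses $W_\nu=\alpha_\nu(C)\tens{Q}_\nu$ from \eqref{eq:modal_inverse}. The triangle inequality then splits the error into a data term $\|\Qt_{R,\epsilon}^\delta-\Qt_{R,\epsilon}\|$ and a truncation term $\|\Qt_{R,\epsilon}-\Qt_R\|$, both in $L^2(B_1;\C^{3\times3})$.

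For the truncation term, $\Qt_{R,\epsilon}-\Qt_R=-\sum_{\nu\notin\mathcal J_\epsilon}\tens{Q}_\nu\psi_\nu$. The $L^2(B_1;\C^{3\times3})$ norm (Frobenius pointwise) is the sum over the nine entries of scalar $L^2$ norms. Parseval in the orthonormal basis $\{\psi_\nu\}$ for each entry, summed over entries, gives exactly $\bigl(\sum_{\nu\notin\mathcal J_\epsilon}\|\tens{Q}_\nu\|_{\Frob}^2\bigr)^{1/2}$. This requires $\Qt_R\in L^2(B_1)$, which holds since $\Qt\in L^\infty$ with bounded support.

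For the data term, linearity of the coefficient map gives $\Qt_{R,\epsilon}^\delta-\Qt_{R,\epsilon}=\sum_{\nu\in\mathcal J_\epsilon}\alpha_\nu(C)^{-1}(W_\nu^\delta-W_\nu)\psi_\nu$, where $W^\delta_\nu-W_\nu$ is the $\nu$-th coefficient of $W^\delta-W$. By Parseval again and $|\alpha_\nu(C)|>\epsilon$ on $\mathcal J_\epsilon$,
\begin{equation*}
\|\Qt_{R,\epsilon}^\delta-\Qt_{R,\epsilon}\|^2=\sum_{\nu\in\mathcal J_\epsilon}\frac{\|W_\nu^\delta-W_\nu\|_{\Frob}^2}{|\alpha_\nu(C)|^2}\le\epsilon^{-2}\sum_\nu\|W_\nu^\delta-W_\nu\|_{\Frob}^2=\epsilon^{-2}\|W^\delta-W\|^2\le\delta_F^2/\epsilon^2,
\end{equation*}
using Bessel's inequality (equality by completeness). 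Adding the two bounds yields \eqref{eq:cutoff_error}.

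There is no real obstacle. The only points needing care are bookkeeping. First, $\mathcal J_\epsilon$ must be finite, or at least the series must converge; this follows from compactness of $\F_C$, so $\alpha_\nu\to0$. Second, $W_\nu$ in \eqref{eq:modal_expansion} must be defined by the same inner product as $W_\nu^\delta$ in \eqref{eq:Q_cutoff}, with conjugation on $\psi_\nu$. Third, $W=\F_C\Qt_R$ must be expressed through \eqref{eq:gpswf_eigen} consistently with that pairing. This is immediate if $\psi_\nu$ may be taken with $\F_C$ acting diagonally in the complex inner product, which the paper assumes. If needed, one could expand $W$ in $\{\psi_\nu\}$ via $\langle\F_C f,\psi_\nu\rangle=\langle f,\F_C^*\psi_\nu\rangle$, but the stated setup already gives $W_\nu=\alpha_\nu\tens{Q}_\nu$.
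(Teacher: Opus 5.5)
Your proposal is correct and follows the paper's own proof. Like the paper, you split off the noise-free truncated reconstruction, bound the retained-mode data term by orthonormality (Parseval) and $|\alpha_\nu(C)|>\epsilon$, evaluate the discarded-mode term exactly, and combine the two with the triangle inequality; your bookkeeping remarks are sound but not essential (finiteness of $\mathcal J_\epsilon$ is not even needed, since the retained coefficients are square-summable).
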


\begin{proof}
The reconstruction error consists of the data error and the truncation error. By the orthonormality of the GPSWF basis and $|\alpha_\nu(C)|>\epsilon$ for $\nu\in\mathcal J_\epsilon$,
\begin{equation*}
\left\|\sum_{\nu\in\mathcal J_\epsilon}\frac{W_\nu^\delta-W_\nu}{\alpha_\nu(C)}\psi_\nu\right\|_{L^2}\leq\frac{\delta_F}{\epsilon}.
\end{equation*}
The discarded modes have norm
\begin{equation*}
\left\|\sum_{\nu\notin\mathcal J_\epsilon}\tens{Q}_\nu\psi_\nu\right\|_{L^2}=\left(\sum_{\nu\notin\mathcal J_\epsilon}\|\tens{Q}_\nu\|_{\Frob}^2\right)^{1/2}.
\end{equation*}
The result follows from the triangle inequality.
\end{proof}

The first term in \eqref{eq:cutoff_error} describes the amplification of errors in the recovered Fourier data, while the second term is the truncation error. Decreasing $\epsilon$ retains more spatial modes but also increases the amplification of data errors. The cutoff parameter therefore balances these two effects. Here $\delta_F$ measures the error after the polarimetric recovery and therefore includes the effect of the conditioning of $\Mt(\vect{\xi})$.

\section{Discrete Acquisition and Reconstruction Stability}\label{sec:discretization}

This section presents the discrete reconstruction from a finite set of incident and observation directions. We describe the algorithm and then analyze the errors introduced by the discrete acquisition and reconstruction.

\subsection{Discrete reconstruction}

We first implement the polarimetric recovery and GPSWF inversion described above and summarize the procedure in an algorithm. Let $\mathcal D=\{\vect{d}_a\}_{a=1}^{N_d}\subset\Sph^2$ and $\mathcal X=\{\hat{\vect{x}}_b\}_{b=1}^{N_x}\subset\Sph^2$ denote the available directions. These generate the normalized Fourier points
\begin{equation*}
\mathcal P=\left\{\frac{\vect{d}_a-\hat{\vect{x}}_b}{2}:1\leq a\leq N_d,\ 1\leq b\leq N_x\right\}\subset\overline{B_1}.
\end{equation*}

Let $\{(\vect{p}_\mu,w_\mu)\}_{\mu=1}^{N_q}$ be the quadrature nodes and weights used in the GPSWF projection. These nodes generally do not belong to $\mathcal P$. For each $\vect{p}_\mu$, we select $J$ nearby pairs from $\mathcal D\times\mathcal X$, denoted by $(\vect{d}_{\mu,j},\hat{\vect{x}}_{\mu,j})$, such that
\begin{equation*}
\vect{p}_{\mu,j}=\frac{\vect{d}_{\mu,j}-\hat{\vect{x}}_{\mu,j}}{2}\approx\vect{p}_\mu,\qquad j=1,\ldots,J.
\end{equation*}
For each selected pair, we choose two orthonormal incident polarizations spanning $\vect{d}_{\mu,j}^{\perp}$. The configurations are selected to give a full-column-rank polarimetric matrix, favoring a larger smallest singular value among configurations with comparable Fourier-point distances.

For each $\vect{p}_\mu$, let $g_\mu^\delta$ be formed as in \eqref{equ:defineofgj}, using the measured far fields in place of the Born far fields. Assemble $\Mt_\mu$ from the selected configurations according to \eqref{eq:measurement_block}. Applying the pseudoinverse formula in Proposition~\ref{prop:tensor_recovery} gives
\begin{equation}\label{eq:discrete_fourier_sample}
\widetilde W_\mu=R^{-3}\sum_{r=1}^{m}\bigl[\Mt_\mu^\dagger g_\mu^\delta\bigr]_r\Tt_r.
\end{equation}

Using these samples, we approximate the Fourier-data coefficients in \eqref{eq:Q_cutoff} by quadrature:
\begin{equation*}
\widetilde W_\nu=\sum_{\mu=1}^{N_q}w_\mu\widetilde W_\mu\overline{\psi_\nu(\vect{p}_\mu)}.
\end{equation*}
The discrete reconstruction is then
\begin{equation}\label{eq:discrete_reconstruction}
\widetilde{\Qt}(\vect{x})=\sum_{\nu\in\mathcal J_\epsilon}\frac{\widetilde W_\nu}{\alpha_\nu(C)}\psi_\nu(\vect{x}/R),\qquad \vect{x}\in B(0,R),
\end{equation}
and is set to zero outside $B(0,R)$.

The reconstruction is summarized in Algorithm~\ref{alg:discrete_reconstruction}.

\begin{algorithm}[ht]
\caption{Discrete polarimetric GPSWF reconstruction}
\label{alg:discrete_reconstruction}
\begin{algorithmic}[1]
\Require Measured far-field data, direction sets $\mathcal D,\mathcal X$, quadrature rule $\{(\vect{p}_\mu,w_\mu)\}_{\mu=1}^{N_q}$, tensor basis $\{\Tt_r\}_{r=1}^{m}$, support radius $R$, and retained eigenpairs $\{(\psi_\nu,\alpha_\nu(C))\}_{\nu\in\mathcal J_\epsilon}$
\For{$\mu=1,\ldots,N_q$}
\State Select $J$ nearby configurations giving a full-column-rank matrix $\Mt_\mu$
\State Assemble $g_\mu^\delta$ and $\Mt_\mu$
\State Compute $\widetilde W_\mu=R^{-3}\sum_{r=1}^{m}[\Mt_\mu^\dagger g_\mu^\delta]_r\Tt_r$
\EndFor
\For{$\nu\in\mathcal J_\epsilon$}
\State Compute $\widetilde W_\nu=\sum_{\mu=1}^{N_q}w_\mu\widetilde W_\mu\overline{\psi_\nu(\vect{p}_\mu)}$
\EndFor
\State \Return $\widetilde{\Qt}$ given by \eqref{eq:discrete_reconstruction}
\end{algorithmic}
\end{algorithm}

In the numerical experiments, the quadrature on $B_1$ combines a radial Gauss--Jacobi rule with a Lebedev rule on $\Sph^2$. Let $(\eta_a,w_a^{(r)})$ be the Gauss--Jacobi nodes and weights for $(1+\eta)^{1/2}$ on $[-1,1]$, and let $(\vect{\omega}_b,w_b^{(s)})$ be the Lebedev nodes and weights. The resulting ball quadrature is
\begin{equation*}
\vect{p}_{ab}=\sqrt{\frac{1+\eta_a}{2}}\,\vect{\omega}_b,\qquad
w_{ab}=\frac{1}{4\sqrt2}w_a^{(r)}w_b^{(s)}.
\end{equation*}
The pair $(a,b)$ is represented by the single index $\mu$ above.

\subsection{Error analysis}

We now estimate the errors in the discrete reconstruction. The selected Fourier points generally differ from the target quadrature nodes. For each $\vect{p}_\mu$, let $h_\mu=\max_{1\leq j\leq J}|\vect{p}_{\mu,j}-\vect{p}_\mu|$. Since $\supp\Qt\subset B(0,R)$, the Fourier transform satisfies
\begin{align}
\|\widehat{\Qt}(2k\vect{p}')-\widehat{\Qt}(2k\vect{p})\|_{\Frob}
&\leq\int_D|e^{2\ii k\vect{p}'\cdot\vect{x}}-e^{2\ii k\vect{p}\cdot\vect{x}}|\,\|\Qt(\vect{x})\|_{\Frob}\dd\vect{x} \nonumber \\
&\leq2kR|\vect{p}'-\vect{p}|\,\|\Qt\|_{L^1(D;\Frob)}. \label{equ:hatqmismatch}
\end{align}
Here $\|\Qt\|_{L^1(D;\Frob)}=\int_D\|\Qt(\vect{x})\|_{\Frob}\dd\vect{x}$. Thus the off-grid error is at most linear in the distance between the measurement and target nodes.

The measured data are obtained by adding noise to the full Maxwell far fields. After normalization and stacking, the resulting vector satisfies
\begin{equation*}\label{eq:discrete_data_decomposition}
g_\mu^\delta=\Mt_\mu c(2k\vect{p}_\mu)+r_\mu+b_\mu+\eta_\mu,
\end{equation*}
where $c$ is defined by \eqref{eq:tensor_basis}, $\eta_\mu$ is the normalized measurement noise, and $b_\mu$ is the normalized and stacked Born remainder \eqref{eq:nonlinear_farfield}. The off-grid contribution is
\begin{equation*}
r_\mu=
\begin{pmatrix}
\Mt_{\mu,1}\bigl[c(2k\vect{p}_{\mu,1})-c(2k\vect{p}_\mu)\bigr]\\
\vdots\\
\Mt_{\mu,J}\bigl[c(2k\vect{p}_{\mu,J})-c(2k\vect{p}_\mu)\bigr]
\end{pmatrix}.
\end{equation*}
The transverse projections and orthonormal polarizations do not increase the Frobenius norm. The preceding Fourier estimate (\ref{equ:hatqmismatch}) therefore gives
\begin{equation*}
\|r_\mu\|_2\leq2kR\sqrt{J}\,h_\mu\|\Qt\|_{L^1(D;\Frob)}.
\end{equation*}

Write $W_\mu=W(\vect{p}_\mu)=R^{-3}\widehat{\Qt}(2k\vect{p}_\mu)$ for the exact scaled Fourier value, and let $\widetilde W_\mu$ be its approximation defined in \eqref{eq:discrete_fourier_sample}. Proposition~\ref{prop:tensor_recovery}, applied to the total perturbation $r_\mu+b_\mu+\eta_\mu$, gives the following estimate.

\begin{proposition}\label{prop:offgrid}
If $\Mt_\mu$ has full column rank, the recovered sample \eqref{eq:discrete_fourier_sample} satisfies
\begin{equation*}\label{eq:offgrid_bound}
\|\widetilde W_\mu-W_\mu\|_{\Frob}
\leq
\frac{R^{-3}}{\sigma_{\min}(\Mt_\mu)}
\left(\|\eta_\mu\|_2+\|b_\mu\|_2
+2kR\sqrt{J}\,h_\mu\|\Qt\|_{L^1(D;\Frob)}\right).
\end{equation*}
\end{proposition}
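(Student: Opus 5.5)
The plan is to reduce the statement to Proposition~\ref{prop:tensor_recovery} applied to the stacked system at the node $\vect p_\mu$, with the total perturbation $r_\mu+b_\mu+\eta_\mu$.

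The decomposition $g_\mu^\delta=\Mt_\mu c(2k\vect p_\mu)+r_\mu+b_\mu+\eta_\mu$ should be justified first. For each selected configuration $j$, the normalized exact far-field block equals the Born block plus the normalized remainder. By Theorem~\ref{thm:born_tensor}, the Born block is $\tens P_{\hat{\vect x}_{\mu,j}}\widehat{\Qt}(2k\vect p_{\mu,j})\tens E_{\mu,j}$. Its vectorization is $\Mt_{\mu,j}c(2k\vect p_{\mu,j})$. Adding and subtracting $\Mt_{\mu,j}c(2k\vect p_\mu)$ then produces exactly the blocks of $r_\mu$.

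Since $\Mt_\mu$ has full column rank, $\Mt_\mu^\dagger\Mt_\mu=I_m$. Hence
\begin{equation*}
\Mt_\mu^\dagger g_\mu^\delta-c(2k\vect p_\mu)=\Mt_\mu^\dagger(r_\mu+b_\mu+\eta_\mu).
\end{equation*}
The basis $\{\Tt_r\}$ is Frobenius-orthonormal, so the coefficient-to-tensor map is an isometry. Multiplying by $R^{-3}$ gives
\begin{equation*}
\|\widetilde W_\mu-W_\mu\|_{\Frob}\le R^{-3}\|\Mt_\mu^\dagger\|_2\,\|r_\mu+b_\mu+\eta_\mu\|_2,
\end{equation*}
with $\|\Mt_\mu^\dagger\|_2=\sigma_{\min}(\Mt_\mu)^{-1}$. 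The triangle inequality then splits the perturbation into the three terms.

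The main step is the bound $\|r_\mu\|_2\le 2kR\sqrt J\,h_\mu\|\Qt\|_{L^1(D;\Frob)}$. For each block,
\begin{equation*}
\|\Mt_{\mu,j}\Delta c\|_2=\|\tens P_{\hat{\vect x}_{\mu,j}}\Delta\widehat{\Qt}\,\tens E_{\mu,j}\|_{\Frob}\le\|\Delta\widehat{\Qt}\|_{\Frob},
\end{equation*}
because $\tens P$ is an orthogonal projection and $\tens E_{\mu,j}$ has orthonormal columns. Here $\Delta\widehat{\Qt}=\widehat{\Qt}(2k\vect p_{\mu,j})-\widehat{\Qt}(2k\vect p_\mu)$. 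One expands $c$ in the basis $\{\Tt_r\}$; if that class is a proper subspace, one assumes $\widehat{\Qt}$ lies in it, as in \eqref{eq:tensor_basis}.

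The Lipschitz estimate \eqref{equ:hatqmismatch} bounds each block by $2kR\,h_\mu\|\Qt\|_{L^1(D;\Frob)}$. It uses $|e^{\ii a}-e^{\ii b}|\le|a-b|$ together with $|\vect x|\le R$ on the support. Summing the squares of the $J$ blocks gives the factor $\sqrt J$, which completes the bound.
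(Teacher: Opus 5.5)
Your proposal is correct and follows the paper's argument. The paper likewise writes $g_\mu^\delta=\Mt_\mu c(2k\vect p_\mu)+r_\mu+b_\mu+\eta_\mu$, bounds $\|r_\mu\|_2$ blockwise using the facts that transverse projections and orthonormal polarizations do not increase the Frobenius norm together with the Lipschitz estimate \eqref{equ:hatqmismatch}, and then applies Proposition~\ref{prop:tensor_recovery} to the total perturbation with the $R^{-3}$ scaling.
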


Assuming positive quadrature weights, we collect the off-grid and data-perturbation contributions separately:
\begin{equation}\label{eq:discrete_error_components}
\begin{aligned}
\Delta_{\mathrm{off}}^2
&=4k^2R^{-4}J\|\Qt\|_{L^1(D;\Frob)}^2
\sum_{\mu=1}^{N_q}
\frac{w_\mu h_\mu^2}{\sigma_{\min}(\Mt_\mu)^2},\\
\Delta_{\mathrm{data}}^2
&=R^{-6}\sum_{\mu=1}^{N_q}
\frac{w_\mu(\|\eta_\mu\|_2+\|b_\mu\|_2)^2}
{\sigma_{\min}(\Mt_\mu)^2}.
\end{aligned}
\end{equation}
Proposition~\ref{prop:offgrid} and the weighted triangle inequality give
\begin{equation*}\label{eq:combined_node_error_bound}
\left(\sum_{\mu=1}^{N_q}w_\mu
\|\widetilde W_\mu-W_\mu\|_{\Frob}^2\right)^{1/2}
\leq\Delta_{\mathrm{off}}+\Delta_{\mathrm{data}}.
\end{equation*}

The quadrature introduces a further error even when the exact samples $W_\mu$ are used. For each retained mode, define
\begin{equation*}\label{eq:quadrature_error_def}
q_\nu=\sum_{\mu=1}^{N_q}w_\mu W_\mu\overline{\psi_\nu(\vect{p}_\mu)}
-\int_{B_1}W(\vect{p})\overline{\psi_\nu(\vect{p})}\dd\vect{p},
\end{equation*}
and set \begin{equation}\label{eq:quadrature_error_norm}
\Delta_{\mathrm{quad}}^2
=\sum_{\nu\in\mathcal J_\epsilon}\|q_\nu\|_{\Frob}^2.
\end{equation}

The propagation of sample errors to the retained coefficients is described by the weighted sampling matrix
\begin{equation}\label{eq:sampling_matrix_A}
A_{\nu\mu}=\sqrt{w_\mu}\,\overline{\psi_\nu(\vect{p}_\mu)},
\qquad \nu\in\mathcal J_\epsilon,\quad \mu=1,\ldots,N_q.
\end{equation}
Writing $C_h=\|A\|_2$, we obtain
\begin{align*}
\left(\sum_{\nu\in\mathcal J_\epsilon}\|\widetilde W_\nu-W_\nu\|_{\Frob}^2\right)^{1/2}
&=\left(\sum_{\nu\in\mathcal J_\epsilon}\left\|\sum_{\mu=1}^{N_q}w_\mu(\widetilde W_\mu-W_\mu)\overline{\psi_\nu(\vect{p}_\mu)}+q_\nu\right\|_{\Frob}^2\right)^{1/2}\\
&\leq\left(\sum_{\nu\in\mathcal J_\epsilon}\left\|\sum_{\mu=1}^{N_q}w_\mu(\widetilde W_\mu-W_\mu)\overline{\psi_\nu(\vect{p}_\mu)}\right\|_{\Frob}^2\right)^{1/2}+\Delta_{\mathrm{quad}}\\
&\leq C_h\left(\sum_{\mu=1}^{N_q}w_\mu\|\widetilde W_\mu-W_\mu\|_{\Frob}^2\right)^{1/2}+\Delta_{\mathrm{quad}}\\
&\leq C_h\Delta_{\mathrm{off}}+\Delta_{\mathrm{quad}}+C_h\Delta_{\mathrm{data}}.
\end{align*}
Combining this estimate with the spectral cutoff separates the reconstruction error into off-grid error, quadrature error, data perturbation, and spectral truncation.

\begin{theorem}\label{thm:discrete_stability}
Suppose each $\Mt_\mu$ has full column rank and the quadrature weights are positive. Let $\Delta_{\mathrm{off}}$, $\Delta_{\mathrm{data}}$, and $\Delta_{\mathrm{quad}}$ be defined by \eqref{eq:discrete_error_components} and \eqref{eq:quadrature_error_norm}, and let $C_h=\|A\|_2$ with $A$ given by \eqref{eq:sampling_matrix_A}. Then the reconstruction \eqref{eq:discrete_reconstruction} satisfies
\begin{equation}\label{eq:combined_discrete_error_bound}
\|\widetilde{\Qt}-\Qt\|_{L^2(B(0,R);\C^{3\times3})}
\leq R^{3/2}\left[
\frac{C_h\Delta_{\mathrm{off}}+\Delta_{\mathrm{quad}}+C_h\Delta_{\mathrm{data}}}{\epsilon}
+\left(\sum_{\nu\notin\mathcal J_\epsilon}\|\tens{Q}_\nu\|_{\Frob}^2\right)^{1/2}
\right].
\end{equation}
\end{theorem}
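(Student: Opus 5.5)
The plan is to reduce the statement to Theorem~\ref{thm:spectral_cutoff} on the unit ball and then rescale to $B(0,R)$. Set $\Qt_R(\vect z)=\Qt(R\vect z)$ and $\widetilde{\Qt}_R(\vect z)=\widetilde{\Qt}(R\vect z)$. By \eqref{eq:discrete_reconstruction},
\begin{equation*}
\widetilde{\Qt}_R=\sum_{\nu\in\mathcal J_\epsilon}\frac{\widetilde W_\nu}{\alpha_\nu(C)}\psi_\nu .
\end{equation*}
The change of variables $\vect x=R\vect z$ gives $\|\widetilde{\Qt}-\Qt\|_{L^2(B(0,R))}=R^{3/2}\|\widetilde{\Qt}_R-\Qt_R\|_{L^2(B_1)}$, which accounts for the prefactor $R^{3/2}$. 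The support assumption $\supp\Qt\subset B(0,R)$ together with the zero extension of $\widetilde{\Qt}$ ensures that no error arises outside the ball.

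On $B_1$, I would split the error as in the proof of Theorem~\ref{thm:spectral_cutoff}, but work directly with the coefficients rather than with a function $W^\delta$. The error decomposes as
\begin{equation*}
\widetilde{\Qt}_R-\Qt_R=\sum_{\nu\in\mathcal J_\epsilon}\frac{\widetilde W_\nu-W_\nu}{\alpha_\nu(C)}\psi_\nu-\sum_{\nu\notin\mathcal J_\epsilon}\tens Q_\nu\psi_\nu .
\end{equation*}
Here I use $W_\nu=\alpha_\nu\tens Q_\nu$ from \eqref{eq:modal_inverse}, where $W_\nu$ is the exact coefficient $\int_{B_1}W\overline{\psi_\nu}$. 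The two sums are orthogonal, so the triangle inequality or Pythagoras applies. Orthonormality of $\{\psi_\nu\}$ in $L^2(B_1)$, applied entrywise to matrix coefficients with the Frobenius norm, gives the following for the first sum, since $|\alpha_\nu|>\epsilon$ on $\mathcal J_\epsilon$:
\begin{equation*}
\Bigl\|\sum_{\nu\in\mathcal J_\epsilon}\frac{\widetilde W_\nu-W_\nu}{\alpha_\nu}\psi_\nu\Bigr\|_{L^2(B_1)}\le\epsilon^{-1}\Bigl(\sum_{\nu\in\mathcal J_\epsilon}\|\widetilde W_\nu-W_\nu\|_\Frob^2\Bigr)^{1/2}.
\end{equation*}
The second sum has norm equal to the truncation tail.

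The remaining step is the coefficient estimate
\begin{equation*}
\Bigl(\sum_{\nu\in\mathcal J_\epsilon}\|\widetilde W_\nu-W_\nu\|_\Frob^2\Bigr)^{1/2}\le C_h\Delta_{\mathrm{off}}+\Delta_{\mathrm{quad}}+C_h\Delta_{\mathrm{data}},
\end{equation*}
which was derived just before the statement, and I would carry it out in order. First, I write $\widetilde W_\nu-W_\nu=\sum_\mu w_\mu(\widetilde W_\mu-W_\mu)\overline{\psi_\nu(\vect p_\mu)}+q_\nu$ and apply the triangle inequality in $\ell^2(\mathcal J_\epsilon;\Frob)$. Second, I bound the first term entrywise. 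For each matrix entry $(i,l)$, the vector $\bigl(\sum_\mu w_\mu x_\mu\overline{\psi_\nu(\vect p_\mu)}\bigr)_\nu$ equals $A y$ with $y_\mu=\sqrt{w_\mu}\,x_\mu$; positivity of the weights is what makes $\sqrt{w_\mu}$ real. Summing $\|Ay\|^2\le C_h^2\|y\|^2$ over entries yields $C_h\bigl(\sum_\mu w_\mu\|\widetilde W_\mu-W_\mu\|_\Frob^2\bigr)^{1/2}$. Third, I bound this weighted sample norm by $\Delta_{\mathrm{off}}+\Delta_{\mathrm{data}}$. From Proposition~\ref{prop:offgrid}, each nodal error is at most $a_\mu+b'_\mu$, with the off-grid part $a_\mu=R^{-3}\sigma_{\min}^{-1}\,2kR\sqrt J h_\mu\|\Qt\|_{L^1}$ and the data part $b'_\mu=R^{-3}\sigma_{\min}^{-1}(\|\eta_\mu\|+\|b_\mu\|)$. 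Minkowski's inequality in the weighted $\ell^2$ norm then gives exactly the two quantities in \eqref{eq:discrete_error_components}; note that $(2kR\cdot R^{-3})^2=4k^2R^{-4}$, matching the stated constant.

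I expect the only real obstacle to be bookkeeping: the entrywise use of $\|A\|_2$ for matrix-valued samples, and checking that the normalization $R^{-3}$ and the scaling by $2k$ match the definitions of $\Delta_{\mathrm{off}}$ and $\Delta_{\mathrm{data}}$. Combining the coefficient bound with the cutoff bound and multiplying by $R^{3/2}$ gives \eqref{eq:combined_discrete_error_bound}.
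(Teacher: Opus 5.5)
Your proposal is correct and follows essentially the same route as the paper. You split on $B_1$ into retained modes, which are bounded by $\epsilon^{-1}$ times the coefficient error, and the discarded tail, then rescale by $\vect x=R\vect z$ to get the factor $R^{3/2}$. The coefficient estimate you re-derive (quadrature defect $q_\nu$, entrywise use of $\|A\|_2$, and Minkowski applied to Proposition~\ref{prop:offgrid}, with constants $4k^2R^{-4}$ and $R^{-6}$ matching \eqref{eq:discrete_error_components}) is exactly the computation the paper carries out just before stating the theorem.
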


\begin{proof}
On the retained modes, division by $|\alpha_\nu(C)|>\epsilon$ gives
\begin{equation*}
\left(\sum_{\nu\in\mathcal J_\epsilon}
\frac{\|\widetilde W_\nu-W_\nu\|_{\Frob}^2}{|\alpha_\nu(C)|^2}\right)^{1/2}
\leq\frac{C_h\Delta_{\mathrm{off}}+\Delta_{\mathrm{quad}}+C_h\Delta_{\mathrm{data}}}{\epsilon}.
\end{equation*}
The discarded modes contribute the truncation term, as in Theorem~\ref{thm:spectral_cutoff}. The triangle inequality gives the error bound on $B_1$. Using $\vect{x}=R\vect{z}$ and $\Qt_R(\vect{z})=\Qt(R\vect{z})$, we have
\begin{equation*}
\|\widetilde{\Qt}-\Qt\|_{L^2(B(0,R))}^2=R^3\|\widetilde{\Qt}(R\,\cdot)-\Qt_R\|_{L^2(B_1)}^2.
\end{equation*}
Taking square roots and applying the preceding estimate gives \eqref{eq:combined_discrete_error_bound}.
\end{proof}

\section{Numerical experiments}\label{sec:numerics}

We now examine the polarimetric recovery and spatial reconstruction through numerical experiments. We compute the conditioning of the polarimetric matrices and use analytically generated Born data to test noise amplification and off-grid error. Reconstruction experiments use far-field data generated by the full Maxwell volume integral equation (VIE), while the inversion is based on the Born approximation.

\subsection{Numerical configuration}\label{subsec:maxwell-gpswf-setup}

We assume all targets are supported in $B_1$. The dimension study uses a centered cube of half-side length $0.4$. The remaining imaging experiments use three rectangular blocks with centers and half-widths
\begin{equation}\label{eq:block_geometry}
\begin{array}{c|c|c}
r&\vect{c}_r&\vect{h}_r\\ \hline
1&(-0.26,0.25,0)&(0.16,0.16,0.16)\\
2&(0.25,0.25,0)&(0.15,0.15,0.16)\\
3&(0,-0.25,0)&(0.18,0.14,0.16).
\end{array}
\end{equation}
The material tensors are
\begin{align}
\Qt_1&=\begin{pmatrix}
0.20+0.040\ii&0.048\ii&-0.026\ii\\
-0.032\ii&0.12+0.030\ii&0.026\ii\\
0.034\ii&-0.014\ii&0.08+0.025\ii
\end{pmatrix},\nonumber\\
\Qt_2&=\begin{pmatrix}
0.09+0.030\ii&-0.020\ii&0.025\ii\\
0.030\ii&0.22+0.045\ii&0.041\ii\\
-0.011\ii&-0.029\ii&0.14+0.035\ii
\end{pmatrix},\label{eq:test_tensors}\\
\Qt_3&=\begin{pmatrix}
0.14+0.035\ii&0.021\ii&0.033\ii\\
-0.009\ii&0.07+0.025\ii&-0.018\ii\\
-0.023\ii&0.026\ii&0.19+0.050\ii
\end{pmatrix}.\nonumber
\end{align}
The centered cube carries $\Qt_1$, while the three blocks carry $\Qt_1$, $\Qt_2$, and $\Qt_3$, respectively. The contrast is zero outside the inclusions. These tensors are nonsymmetric and are not scalar multiples of a common matrix.

The VIE is discretized on a Cartesian voxel grid. In the $k=12$ full-tensor test, the forward discretization has $3695$ active voxel nodes and $11085$ field unknowns; $770$ distinct incident directions generate $1540$ polarized right-hand sides, and the largest sampled linear-system residual is $2.0\times10^{-16}$. At each target Fourier node, six configurations are selected from 24 nearby candidates according to distance, rank, and conditioning. Complex Gaussian noise is added to the stacked normalized far-field data and scaled to the prescribed relative level $\delta$. Unless stated otherwise, GPSWF modes satisfy the relative eigenvalue cutoff $|\alpha_\nu|>0.2\max_{\nu'}|\alpha_{\nu'}|$, subject to the prescribed dimension cap. 

Unless stated otherwise, the reconstructed images below show the real part of $Q_{11}$ on the cross-section $x_3=0$.

\subsection{Polarimetric conditioning and measurement redundancy}
\label{subsec:conditioning}

We first examine how the measurement configurations affect polarimetric conditioning. In particular, we compare the three- and six-configuration equiangular designs with the configurations selected for the tensor-reconstruction experiment following Section~\ref{sec:discretization}. The equiangular designs use the construction in Theorem~\ref{thm:three_configuration}, with $\theta_j=2\pi(j-1)/J$, $j=1,\ldots,J$, for $J=3,6$. For each equiangular design, the singular values depend only on the normalized Fourier radius $\rho$, since changing the direction of the Fourier point simply rotates the measurement configurations. As shown in Figure~\ref{fig:conditioning}, the two equiangular designs have coincident condition-number curves, while the six-configuration design has a larger smallest singular value. The condition numbers increase near both the center and boundary of the Fourier ball, consistent with Proposition~\ref{prop:center_boundary_conditioning}. The redundant configurations selected following Section~\ref{sec:discretization} generally yield larger smallest singular values than the three-configuration design and lie closer to the theoretical six-configuration curve.

\begin{figure}[ht]
\centering
\begin{minipage}{0.45\textwidth}
\centering
\includegraphics[width=\textwidth]{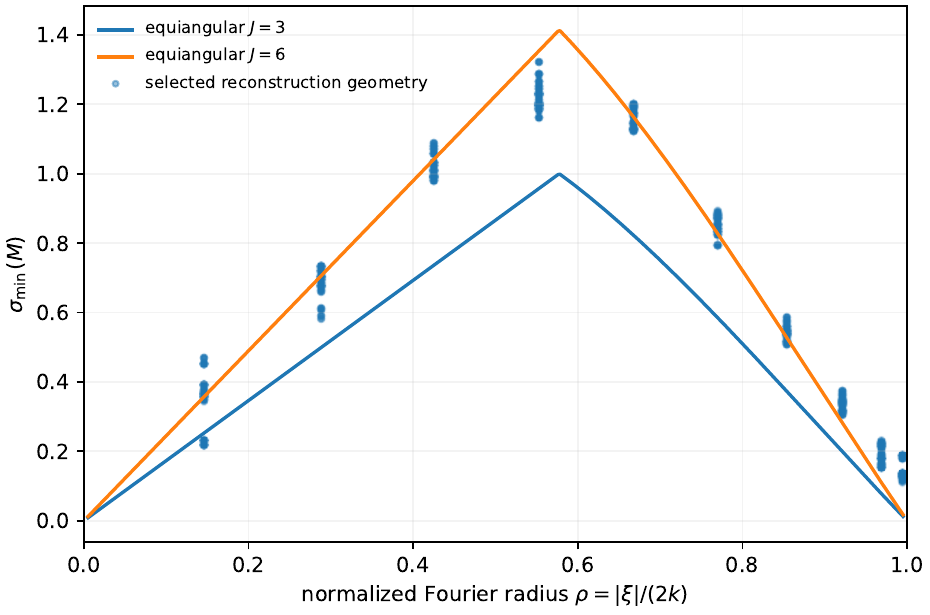}
\end{minipage}\hfill
\begin{minipage}{0.45\textwidth}
\centering
\includegraphics[width=\textwidth]{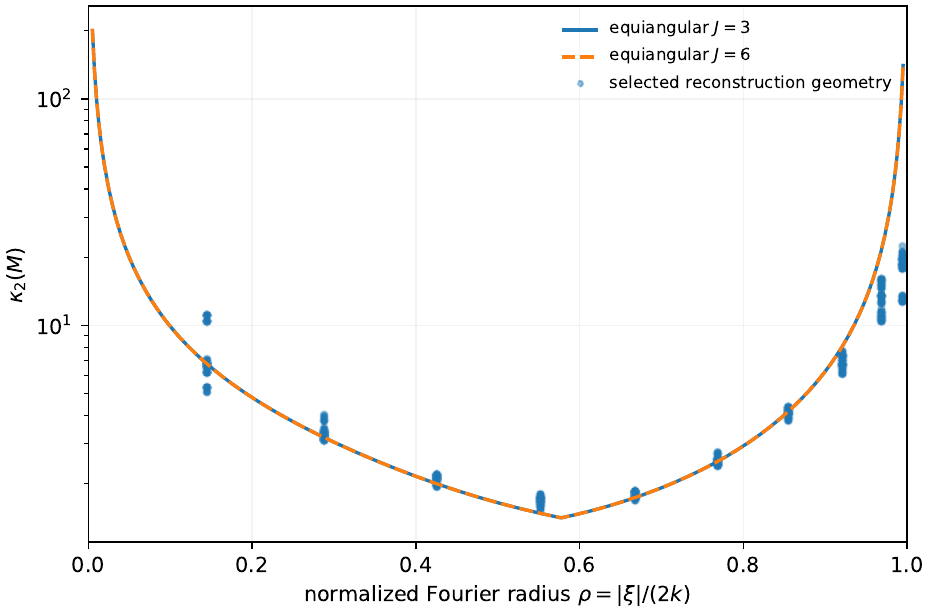}
\end{minipage}
\caption{Polarimetric conditioning versus normalized Fourier radius. Left: smallest singular value. Right: two-norm condition number. Curves show the three- and six-configuration equiangular designs; points represent the configurations selected for the $k=12$ tensor-reconstruction experiment.}
\label{fig:conditioning}
\end{figure}

\begin{figure}[ht]
\centering
\includegraphics[width=0.5\textwidth]{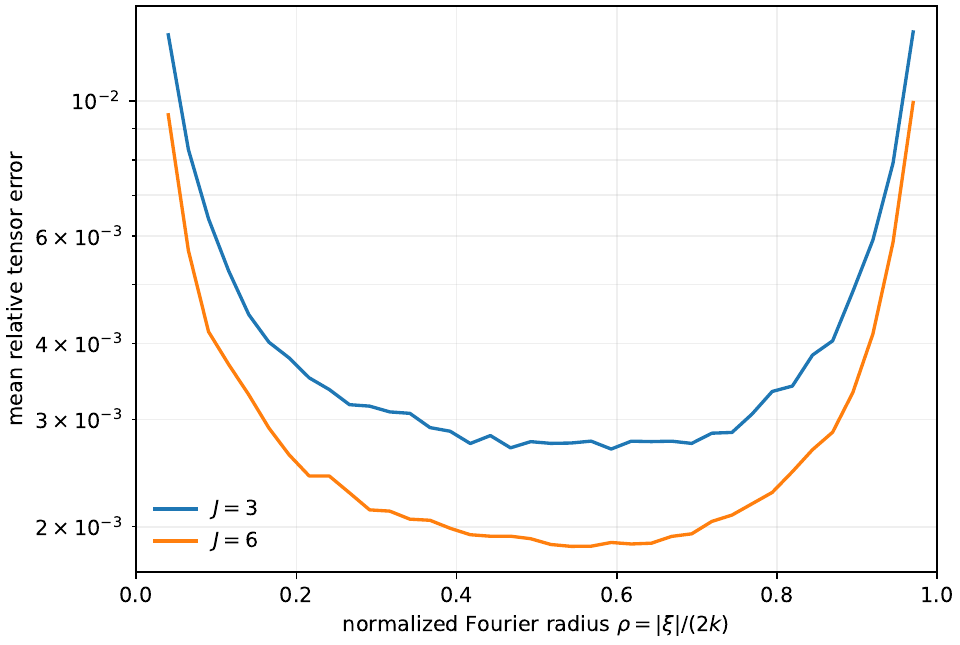}
\caption{Mean relative tensor-recovery error for three and six equiangular configurations, with the same noise variance per measured component.}
\label{fig:sparseviews}
\end{figure}


We next examine how measurement redundancy affects tensor recovery in the presence of noise. For a fixed complex tensor with unit Frobenius norm, we generate data from \eqref{eq:polarimetric_system} and add independent complex Gaussian noise with variance $10^{-6}$ per measured component for both designs. At each Fourier radius, we perform 200 noise trials and plot the mean relative Frobenius error in Figure~\ref{fig:sparseviews}. Six configurations give smaller mean errors than three, demonstrating the benefit of redundant measurements. For both designs, the errors increase near the center and boundary of the Fourier ball, where the smallest singular values decrease.

\subsection{Off-grid error}\label{subsec:mismatch}

We examine the error caused by using nearby realizable Fourier points in place of the target quadrature nodes. Using the analytic Fourier transform of the three-block target defined by \eqref{eq:block_geometry} and \eqref{eq:test_tensors}, we generate noise-free data through the Born relation \eqref{eq:born_tensor}. We vary the maximum normalized distance $h$ between the measurement and target Fourier points and compare the recovered tensor Fourier values with the exact values at the target nodes. Figure~\ref{fig:mismatch} shows log--log slopes of $1.03$, $1.01$, $0.95$, and $1.00$ for $\rho=0.25$, $0.50$, $0.75$, and $0.90$, respectively, in the small-distance regime, consistent with the first-order estimate in Proposition~\ref{prop:offgrid}.

\begin{figure}[ht]
\centering
\includegraphics[width=0.5\textwidth]{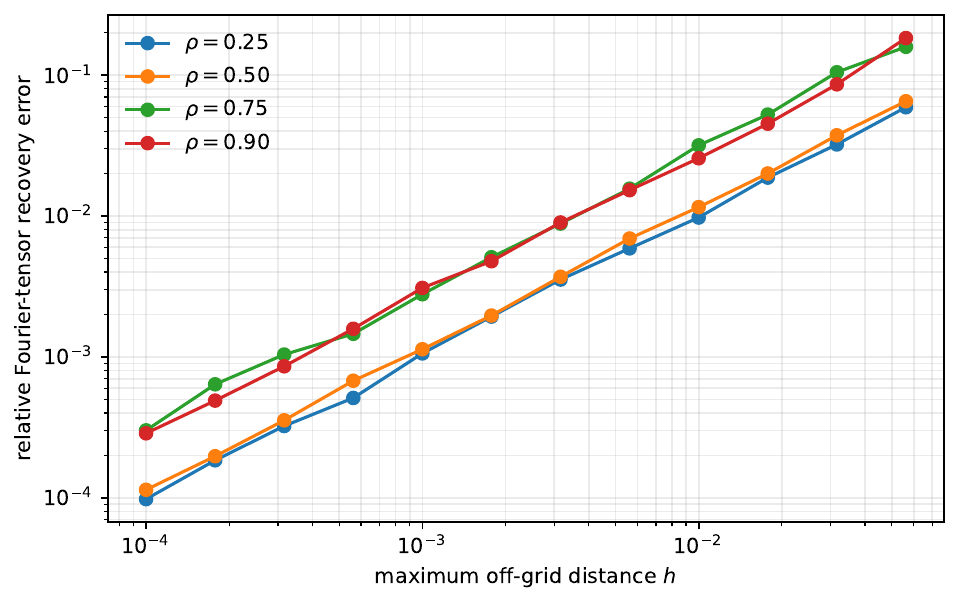}
\caption{Relative Fourier-tensor recovery error versus the maximum normalized off-grid distance $h$. The fitted log--log slopes are close to one, consistent with the estimate in Proposition~\ref{prop:offgrid}.}
\label{fig:mismatch}
\end{figure}

\begin{figure}[ht]
\centering
\includegraphics[width=0.9\textwidth]{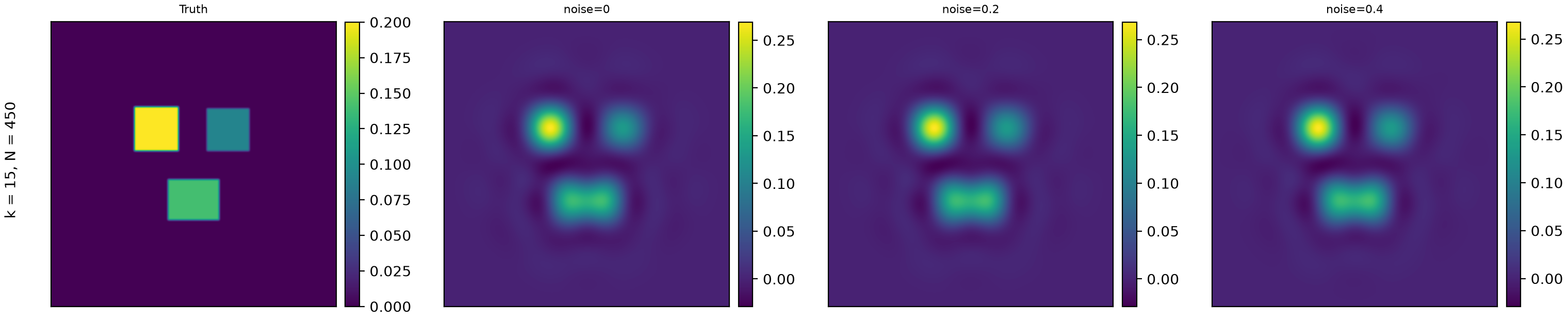}
\caption{Reconstruction of the $Q_{11}$ cross-section for relative far-field noise levels $\delta=0,0.2,0.4$, with $k=15$ and $N=450$. Each panel uses its own color scale.}
\label{fig:noise}
\end{figure}

\subsection{Retained dimension and noise}\label{subsec:rank_noise}

We first fix $N=450$ and use the three-block target at $k=15$ to examine the effect of far-field noise. The same Gaussian realization is rescaled to $\delta=0,0.2,0.4$, while the measurement configurations and reconstruction parameters remain unchanged. As shown in Figure~\ref{fig:noise}, the three blocks remain distinguishable across the tested noise levels, with relatively small changes in the reconstructed profiles.

\begin{figure}[ht]
\centering
\includegraphics[width=0.8\textwidth]{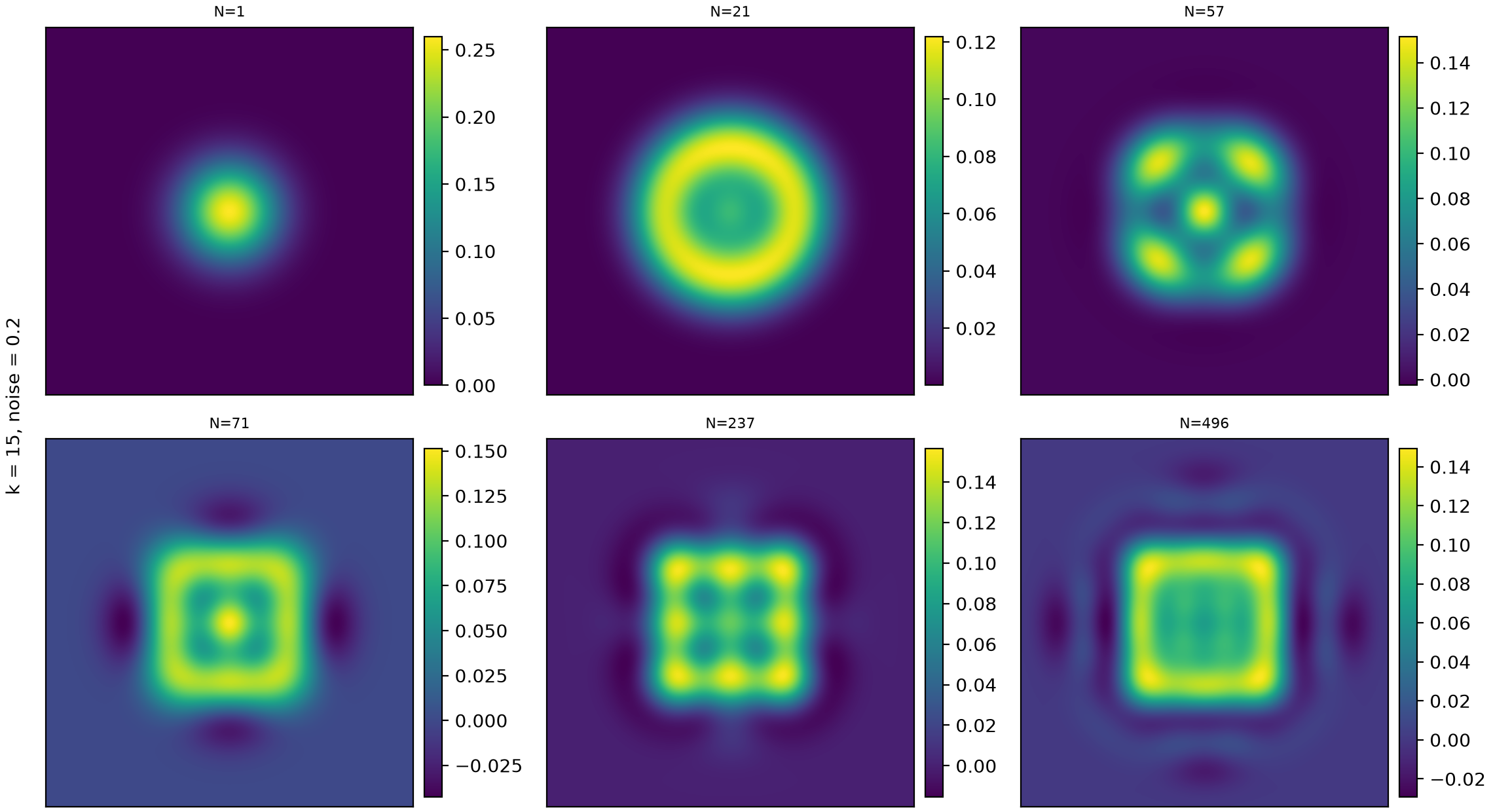}
\caption{Real part of the reconstructed $Q_{11}$ cross-section for increasing retained GPSWF dimension, with $k=15$ and $\delta=0.2$. Each panel uses its own color scale.}
\label{fig:dimension}
\end{figure}

We next examine the effect of the retained GPSWF dimension using the centered cube with $k=15$ and $\delta=0.2$. Figure~\ref{fig:dimension} shows reconstructions with $N=1,21,57,71,237,496$ modes. Small retained dimensions provide little geometric information, while larger dimensions gradually reveal the square cross-section. Further modes introduce finer details together with oscillations, illustrating the balance between truncation error and data-error amplification in \eqref{eq:cutoff_error}.

\begin{figure}[ht]
\centering
\includegraphics[width=0.8\textwidth]{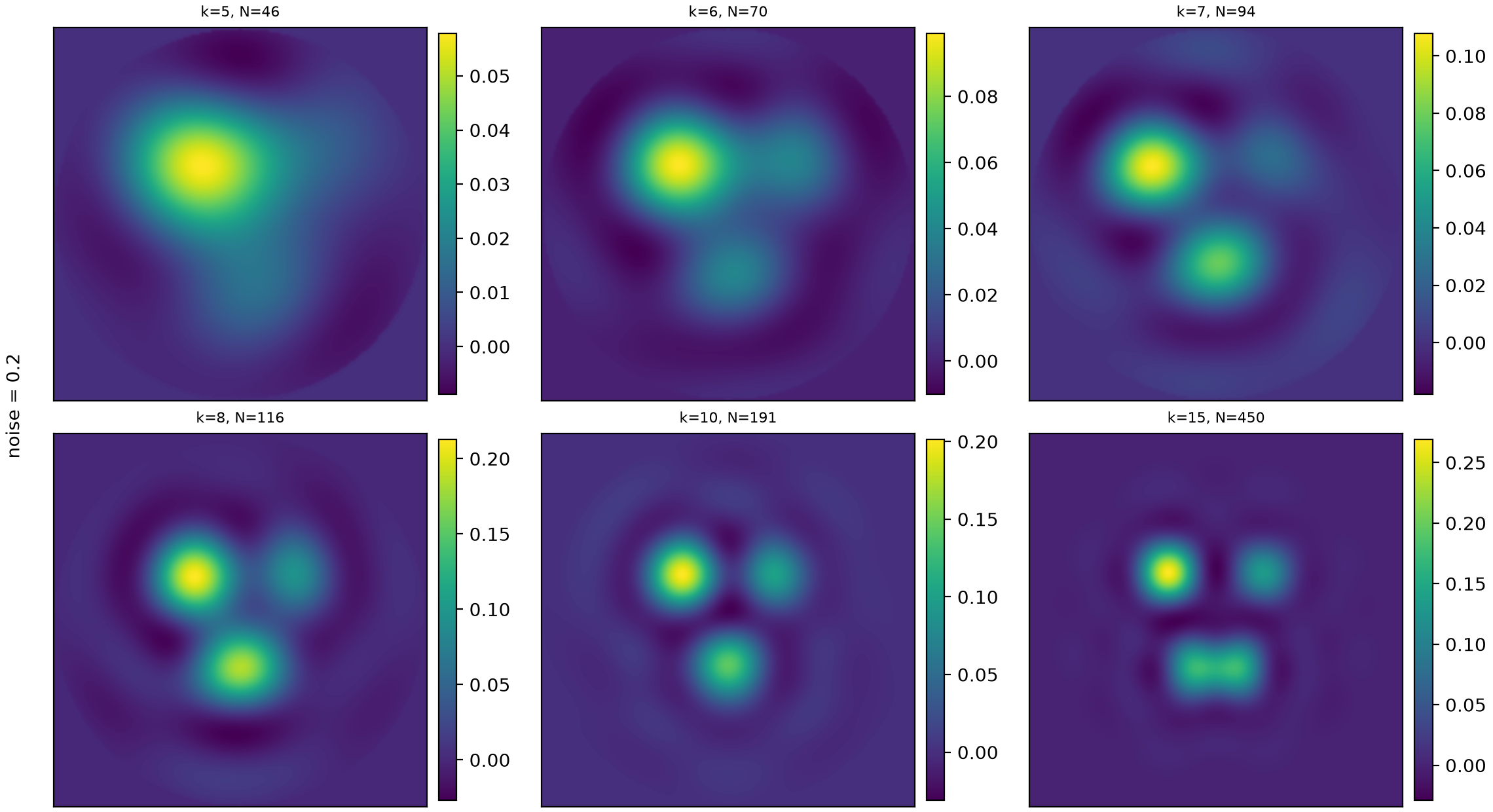}
\caption{Reconstruction of the $Q_{11}$ cross-section at different wavenumbers with $\delta=0.2$. Each panel uses its own color scale.}
\label{fig:frequency}
\end{figure}

\subsection{Effect of the wavenumber}\label{subsec:frequency}

We examine the three-block target at $k=5,6,7,8,10,15$ with $\delta=0.2$. The corresponding retained GPSWF dimensions are $46$, $70$, $94$, $116$, $191$, and $450$. Increasing $k$ expands the accessible Fourier ball and provides additional spatial information; the measurement and reconstruction discretizations are refined accordingly. Figure~\ref{fig:frequency} shows that the reconstructed profiles overlap at smaller wavenumbers, while the three blocks become distinguishable at $k=8$. Larger wavenumbers provide clearer separation and more geometric detail. The minimum boundary separation between the blocks is $0.20$, which is smaller than the half-wavelength $\pi/k\approx0.393$ at $k=8$. Nevertheless, the three blocks are distinguishable in this reconstruction.

\subsection{Comparison of reconstruction bases}\label{subsec:maxwell-basis-comparison}

We compare GPSWF, cube Fourier \cite{WangGuoZhangLiu2017}, and spherical-harmonic Bessel reconstructions using the same recovered Fourier samples and quadrature nodes at $k=8,12,15$ with $\delta=0.2$. The retained dimensions are $109$, $283$, and $450$ for GPSWF, and $130$, $339$, and $512$ for the other two bases, respectively. The comparison therefore uses identical input data with similar, but not identical, reconstruction dimensions.

\begin{figure}[ht]
\centering
\includegraphics[width=0.8\textwidth]{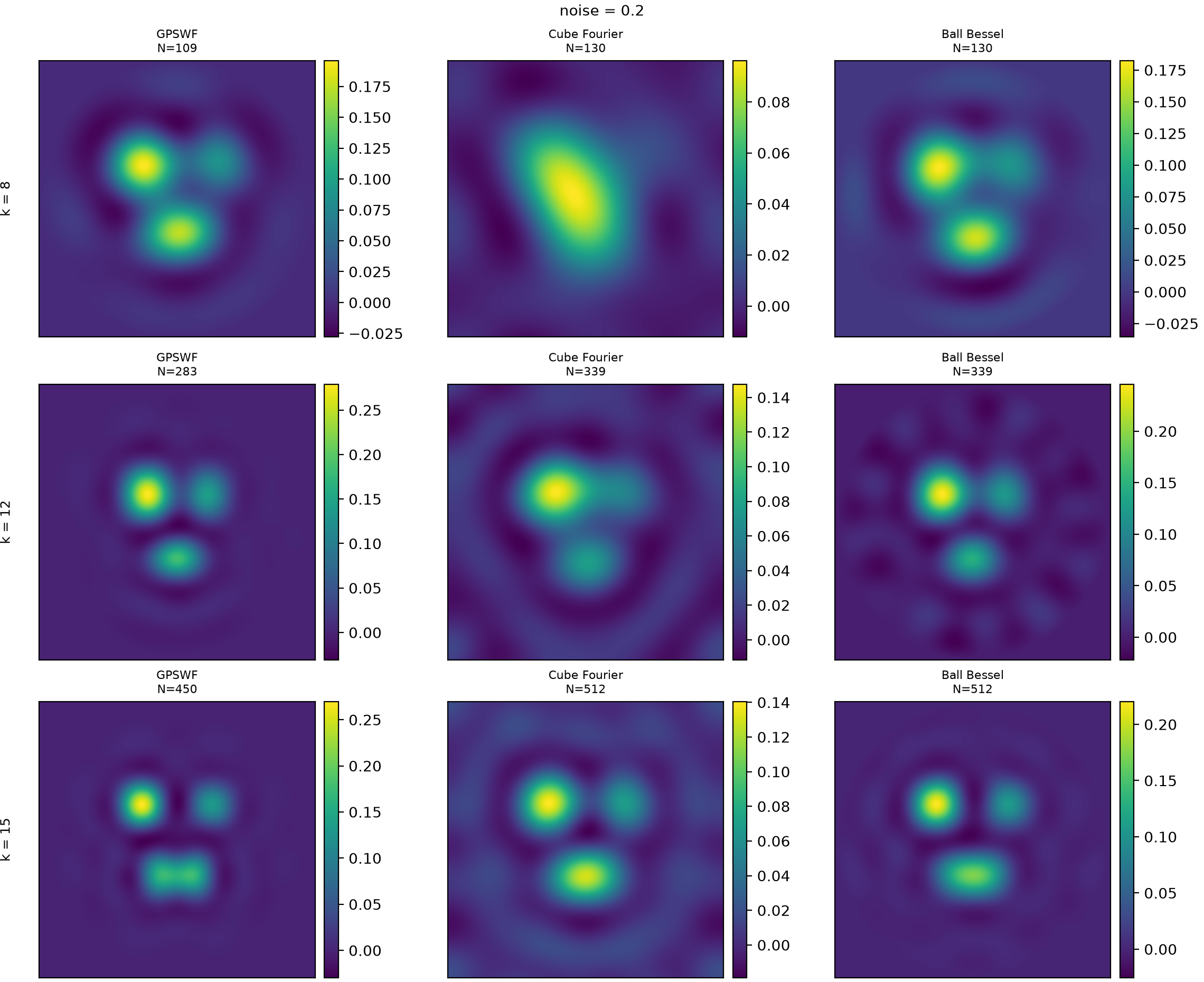}
\caption{GPSWF, cube Fourier, and spherical-harmonic Bessel reconstructions at $k=8,12,15$ with $\delta=0.2$. Each panel uses its own color scale.}
\label{fig:bases}
\end{figure}

Figure~\ref{fig:bases} shows that GPSWF and Bessel reconstructions distinguish the three blocks at $k=8$, whereas the Fourier reconstruction produces a largely merged profile. All three methods improve as the wavenumber increases. At $k=15$, the GPSWF reconstruction begins to reproduce the square cross-sections, while the other two remain more rounded. The Fourier and Bessel reconstructions also exhibit more pronounced approximately periodic oscillations and sidelobes. The weaker artifacts in the GPSWF reconstruction are consistent with its joint spatial--spectral concentration, which is adapted to both the bounded support and the accessible Fourier band.

\subsection{Full anisotropic tensor recovery}\label{subsec:tensor}

\begin{figure}[ht]
\centering
\includegraphics[width=0.8\textwidth]{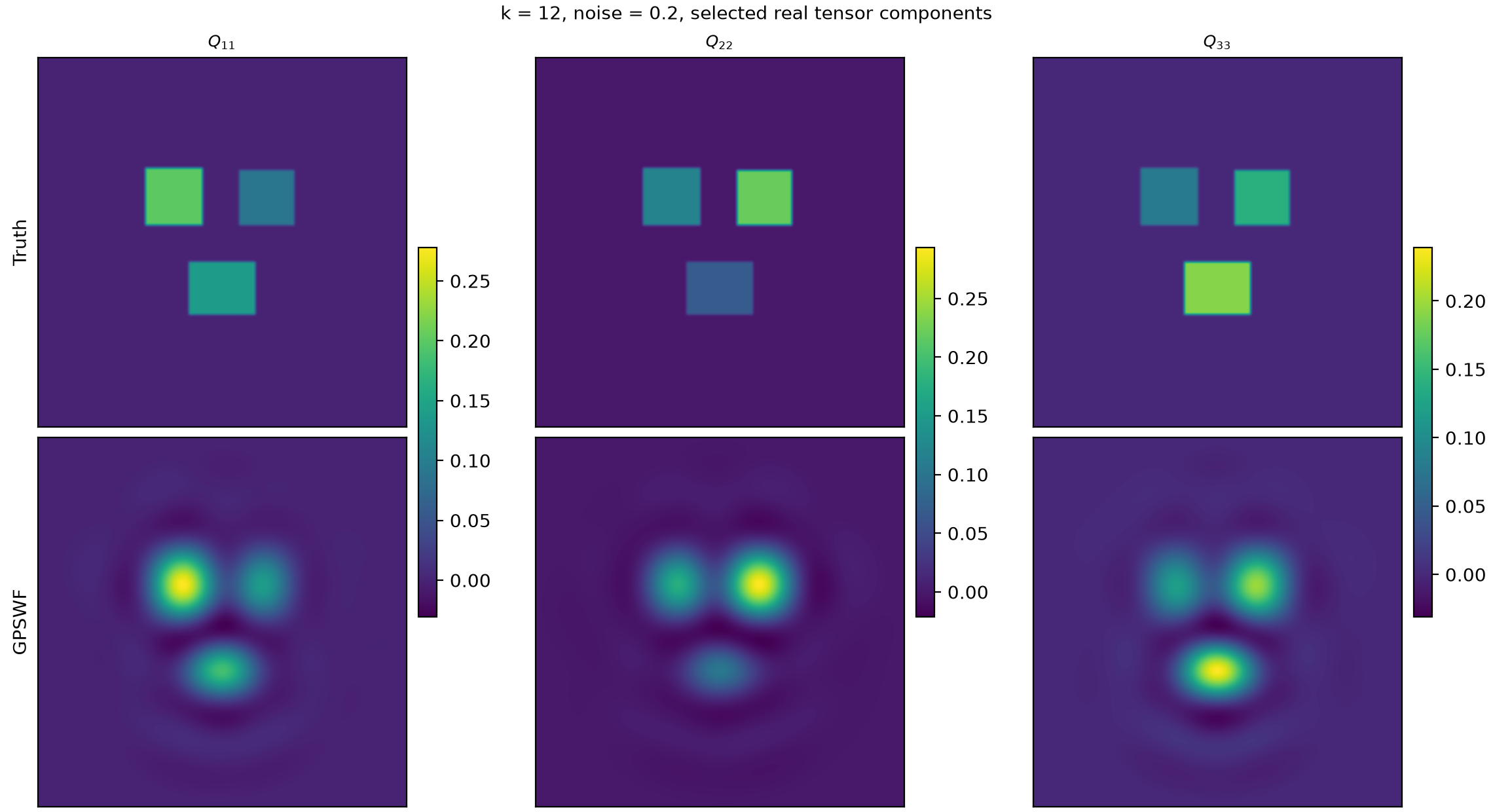}
\caption{Exact and reconstructed real parts of $Q_{11}$, $Q_{22}$, and $Q_{33}$ on $x_3=0$, with $k=12$, $\delta=0.2$, and $N=283$. Each exact/reconstructed pair shares a common color scale.}
\label{fig:tensorreal}
\end{figure}

\begin{figure}[ht]
\centering
\includegraphics[width=0.8\textwidth]{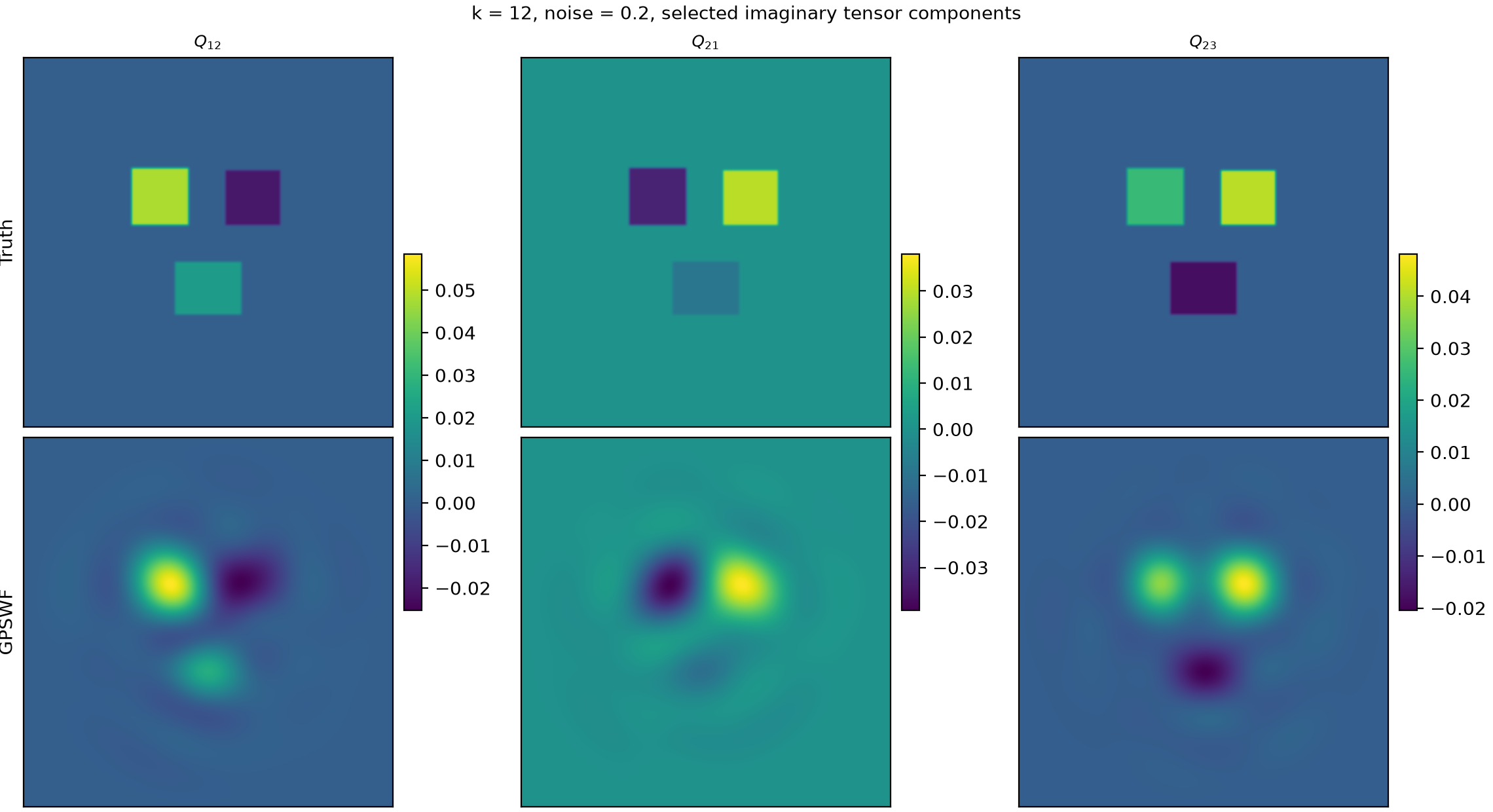}
\caption{Exact and reconstructed imaginary parts of $Q_{12}$, $Q_{21}$, and $Q_{23}$ on $x_3=0$ for the same experiment. Each exact/reconstructed pair shares a common color scale.}
\label{fig:tensorimag}
\end{figure}

The preceding reconstruction figures show only $Q_{11}$. We now examine the recovery of different components of the three-block tensor contrast, using $k=12$, $\delta=0.2$, and $N=283$. All nine complex entries are reconstructed simultaneously. The relative error of the complex tensor reconstruction over the target cross-sections is $0.451$, while the relative Frobenius errors of the block-averaged tensors are $0.244$, $0.226$, and $0.327$, respectively.

Figures~\ref{fig:tensorreal} and~\ref{fig:tensorimag} show the corresponding reconstructions of the diagonal real parts and three off-diagonal imaginary parts, respectively. The reconstructions distinguish the block locations and the different component amplitudes and signs, although the interfaces are smoothed and some amplitudes are distorted.

\section{Conclusion}\label{sec:conclusion}

We developed a conditioning-aware low-rank imaging framework for anisotropic Maxwell media. The main contribution lies in the polarimetric recovery of the tensor Fourier data. Maxwell transversality restricts the measurements to projected actions of the tensor Fourier transform. For a general $3\times3$ contrast, we proved that three configurations, each using two orthogonal incident polarizations, are necessary and provided an explicit three-configuration design that is sufficient at every nonzero interior Fourier point. The Gram determinant and singular-value asymptotics quantify the conditioning of this design and identify the intrinsic loss of full-tensor information at the boundary of the accessible Fourier ball.

After polarimetric recovery, the spatial inverse problem is the same compact restricted-Fourier inversion for every tensor component. Ball GPSWFs diagonalize this operator and provide a direct spectral cutoff. For finite acquisition, we quantified the error caused by replacing target Fourier nodes with nearby realizable measurement points. The resulting estimate separates off-grid error, quadrature error, measurement and Born modeling errors, and spectral truncation, while accounting for polarimetric and spectral amplification.

The numerical results support these mechanisms through analytically generated Born data and full-Maxwell VIE data. Three configurations are sufficient, while six reduce the recovery error under the same noise variance per measured component. The off-grid error exhibits first-order dependence on the node distance, and the configurations selected for the reconstruction remain full rank while becoming less well conditioned near the Fourier-ball boundary. The reconstruction distinguishes the material tensors from full-Maxwell data, although appreciable pointwise errors remain. The present analysis suggests two extensions: acquisition designs that improve the smallest singular value under limited-aperture constraints, and nonlinear iterations initialized by the present low-rank reconstruction.

\section*{Acknowledgments}
This work was partially supported by the National Key R\&D Program of China grant number 2024YFA1016000 and the Fundamental Research Funds for the Central Universities grant number 226-2025-00192.

\section*{Data Availability Statement}
Data and codes supporting the findings of this work are available upon request.

\section*{Conflict of interest} 
The authors have no conflicts of interest to declare. 

\appendix

\section{Gram matrix calculation for the three-configuration design}
\label{app:three_configuration}

We give the matrix calculation used in Theorem~\ref{thm:three_configuration}. Choose the standard Frobenius tensor basis. Columnwise vectorization yields
\begin{equation*}
\Mt_j=\tens{E}_j^\top\otimes\tens{P}_{\hat{\vect{x}}_j}.
\end{equation*}
Since $\tens{E}_j\tens{E}_j^*=\tens{P}_{\vect{d}_j}$ and the projection matrices are real,
\begin{equation*}
\Mt_j^*\Mt_j
=(\overline{\tens{E}_j}\tens{E}_j^\top)\otimes\tens{P}_{\hat{\vect{x}}_j}
=\tens{P}_{\vect{d}_j}\otimes\tens{P}_{\hat{\vect{x}}_j}.
\end{equation*}
Thus
\begin{equation*}
\Mt^*\Mt=\sum_{j=1}^{3}\tens{P}_{\vect{d}_j}\otimes\tens{P}_{\hat{\vect{x}}_j}.
\end{equation*}

Take $\vect{u},\vect{v},\vect{n}$ as the coordinate axes and write $t=\sqrt{1-\rho^2}$. The three transverse directions are
\begin{equation*}
\vect{s}_1=(1,0,0)^\top,\qquad
\vect{s}_2=(-1/2,\sqrt3/2,0)^\top,\qquad
\vect{s}_3=(-1/2,-\sqrt3/2,0)^\top.
\end{equation*}
Writing $\vect{s}_j=(a_j,b_j,0)^\top$, the projection matrices become
\begin{equation*}
\tens{P}_{\vect{d}_j}=
\begin{pmatrix}
1-t^2a_j^2&-t^2a_jb_j&-\rho ta_j\\
-t^2a_jb_j&1-t^2b_j^2&-\rho tb_j\\
-\rho ta_j&-\rho tb_j&t^2
\end{pmatrix},
\qquad
\tens{P}_{\hat{\vect{x}}_j}=
\begin{pmatrix}
1-t^2a_j^2&-t^2a_jb_j&\rho ta_j\\
-t^2a_jb_j&1-t^2b_j^2&\rho tb_j\\
\rho ta_j&\rho tb_j&t^2
\end{pmatrix}.
\end{equation*}

The sums needed to assemble the Gram matrix are
\begin{equation*}
\begin{aligned}
&\sum_j a_j=\sum_j b_j=\sum_j a_jb_j=0,
\qquad \sum_j a_j^2=\sum_j b_j^2=\frac32,\\
&\sum_j a_j^3=\frac34,\qquad
\sum_j a_jb_j^2=-\frac34,\qquad
\sum_j a_j^2b_j=\sum_j b_j^3=0,\\
&\sum_j a_j^4=\sum_j b_j^4=\frac98,\qquad
\sum_j a_j^2b_j^2=\frac38,\qquad
\sum_j a_j^3b_j=\sum_j a_jb_j^3=0.
\end{aligned}
\end{equation*}
For example,
\begin{equation*}
\begin{aligned}
[\Mt^*\Mt]_{11}
&=\sum_j(1-t^2a_j^2)^2
=3-3t^2+\frac98t^4,\\
[\Mt^*\Mt]_{13}
&=\sum_j(1-t^2a_j^2)\rho ta_j
=-\frac34\rho t^3.
\end{aligned}
\end{equation*}
Evaluating the remaining entries in the same way gives
\begin{equation*}
\Mt^*\Mt=
\left(
\begin{array}{ccc|ccc|ccc}
a&0&-d&0&c&0&d&0&-f\\
0&b&0&c&0&d&0&-d&0\\
-d&0&e&0&d&0&-f&0&0\\ \hline
0&c&0&b&0&d&0&-d&0\\
c&0&d&0&a&0&-d&0&-f\\
0&d&0&d&0&e&0&-f&0\\ \hline
d&0&-f&0&-d&0&e&0&0\\
0&-d&0&-d&0&-f&0&e&0\\
-f&0&0&0&-f&0&0&0&3t^4
\end{array}
\right),
\end{equation*}
where
\begin{equation*}
\begin{aligned}
a&=3-3t^2+\frac98t^4,&
b&=3-3t^2+\frac38t^4,&
c&=\frac38t^4,\\
d&=\frac34\rho t^3,&
e&=3t^2-\frac32t^4,&
f&=\frac32\rho^2t^2.
\end{aligned}
\end{equation*}

Let $\tens{F}_{ab}$ denote the matrix with a single unit entry in position $(a,b)$. Consider the following Frobenius-orthonormal tensor basis:
\begin{equation*}
\begin{array}{ccc}
\dfrac{\tens{F}_{11}+\tens{F}_{22}}{\sqrt2},&
\tens{F}_{33},&
\dfrac{\tens{F}_{11}-\tens{F}_{22}}{\sqrt2},\\[6pt]
\dfrac{\tens{F}_{13}-\tens{F}_{31}}{\sqrt2},&
\dfrac{\tens{F}_{12}+\tens{F}_{21}}{\sqrt2},&
\dfrac{\tens{F}_{23}-\tens{F}_{32}}{\sqrt2},\\[6pt]
\dfrac{\tens{F}_{12}-\tens{F}_{21}}{\sqrt2},&
\dfrac{\tens{F}_{13}+\tens{F}_{31}}{\sqrt2},&
\dfrac{\tens{F}_{23}+\tens{F}_{32}}{\sqrt2}.
\end{array}
\end{equation*}
Let $\tens{S}$ have as its columns the columnwise vectorizations of these matrices, read from left to right and top to bottom. Then $\tens{S}^*\tens{S}=I_9$.

For the first two basis matrices, the corresponding Gram entries are
\begin{equation*}
\begin{aligned}
[\tens{S}^*\Mt^*\Mt\tens{S}]_{11}
&=\frac12\bigl([\Mt^*\Mt]_{11}+[\Mt^*\Mt]_{15}
+[\Mt^*\Mt]_{51}+[\Mt^*\Mt]_{55}\bigr)
=a+c,\\
[\tens{S}^*\Mt^*\Mt\tens{S}]_{12}
&=\frac{1}{\sqrt2}\bigl([\Mt^*\Mt]_{19}+[\Mt^*\Mt]_{59}\bigr)
=-\sqrt2f.
\end{aligned}
\end{equation*}
For the third and fourth basis matrices,
\begin{equation*}
\begin{aligned}
[\tens{S}^*\Mt^*\Mt\tens{S}]_{33}&=a-c,\\
[\tens{S}^*\Mt^*\Mt\tens{S}]_{34}
&=\frac12\bigl([\Mt^*\Mt]_{17}-[\Mt^*\Mt]_{13}
-[\Mt^*\Mt]_{57}+[\Mt^*\Mt]_{53}\bigr)
=2d,\\
[\tens{S}^*\Mt^*\Mt\tens{S}]_{44}&=e+f.
\end{aligned}
\end{equation*}
The remaining products give the block decomposition
\begin{equation}\label{eq:appendix_gram_blocks}
\tens{S}^*(\Mt^*\Mt)\tens{S}
=\operatorname{diag}\!\left(
\tens{B}_0,\tens{B}_+,\tens{B}_-,b-c,e-f,e-f
\right),
\end{equation}
where
\begin{equation*}
\tens{B}_0=
\begin{pmatrix}
a+c&-\sqrt2f\\
-\sqrt2f&3t^4
\end{pmatrix},
\qquad
\tens{B}_+=
\begin{pmatrix}
a-c&2d\\
2d&e+f
\end{pmatrix},
\qquad
\tens{B}_-=
\begin{pmatrix}
b+c&-2d\\
-2d&e+f
\end{pmatrix}.
\end{equation*}
Using $t^2=1-\rho^2$, we have
\begin{equation*}
\begin{aligned}
a+c&=\frac32(1+\rho^4),&
a-c=b+c&=\frac34(1+\rho^2)^2,\\
e+f&=\frac32t^2(1+2\rho^2),&
b-c&=3\rho^2,\qquad e-f=\frac32t^2.
\end{aligned}
\end{equation*}
Consequently,
\begin{equation*}
\tens{B}_0=
\begin{pmatrix}
\frac32(1+\rho^4)&-\frac{3\sqrt2}{2}\rho^2t^2\\
-\frac{3\sqrt2}{2}\rho^2t^2&3t^4
\end{pmatrix},
\qquad
\tens{B}_\pm=
\begin{pmatrix}
\frac34(1+\rho^2)^2&\pm\frac32\rho t^3\\
\pm\frac32\rho t^3&\frac32t^2(1+2\rho^2)
\end{pmatrix}.
\end{equation*}

The determinants of these blocks are
\begin{align*}
\det\tens{B}_0
&=\frac92(1+\rho^4)t^4-\frac92\rho^4t^4
=\frac92t^4,\\
\det\tens{B}_\pm
&=\frac98t^2\left[(1+\rho^2)^2(1+2\rho^2)-2\rho^2t^4\right]\\
&=\frac98t^2(9\rho^4+2\rho^2+1).
\end{align*}
Since $\tens{S}$ is unitary, \eqref{eq:appendix_gram_blocks} gives
\begin{align*}
\det(\Mt^*\Mt)
&=3\rho^2\left(\frac32t^2\right)^2
\det\tens{B}_0\,\det\tens{B}_+\,\det\tens{B}_-\\
&=3\rho^2\left(\frac32t^2\right)^2
\frac92t^4\left[\frac98t^2(9\rho^4+2\rho^2+1)\right]^2\\
&=\frac{3^9}{2^9}\rho^2(1-\rho^2)^6
(9\rho^4+2\rho^2+1)^2.
\end{align*}
This is positive for $0<\rho<1$, proving the full-rank assertion in Theorem~\ref{thm:three_configuration}.

\section{Computation of the ball GPSWFs}
\label{sec:gpswf_implementation}

This appendix describes the numerical construction of the ball GPSWFs used in Section~\ref{sec:gpswf}. The restricted Fourier operator $\F_C$ commutes with the differential operator \cite{Greengard2024,ZhangLiWangZhang2020}
\begin{equation*}
\Dop_C=-\nabla\cdot\big((\Id-\vect{x}\vect{x}^{\top})\nabla\big)+C^2|\vect{x}|^2.
\label{eq:ball_operator}
\end{equation*}
The eigenfunctions of $\Dop_C$ can therefore be chosen to be the GPSWFs of $\F_C$.

Writing
\begin{equation*}
\psi_{\ell n m}(r,\omega;C)=R_{\ell n}(r;C)Y_\ell^m(\omega),
\end{equation*}
and separating the angular variables gives the radial equation
\begin{equation*}
\left[-(1-r^2)\frac{\mathrm d^2}{\mathrm dr^2}-\left(\frac{2}{r}-4r\right)\frac{\mathrm d}{\mathrm dr}+\frac{\ell(\ell+1)}{r^2}+C^2r^2\right]R_{\ell n}(r;C)=\chi_{\ell n}(C)R_{\ell n}(r;C).
\label{eq:radial_differential}
\end{equation*}
Here $\chi_{\ell n}(C)$ is the corresponding differential eigenvalue. We take the solution that is regular at the origin and belongs to the finite-energy domain at $r=1$.

To discretize this equation, set $\zeta=2r^2-1$ and $v=\ell+1/2$. Let $\widetilde P_j^{(0,v)}$ denote the Jacobi polynomials orthonormal with respect to the weight $(1+\zeta)^v$ on $(-1,1)$. The radial function is approximated by
\begin{equation}
R_{\ell n}(r;C)\approx 2^{\ell/2+5/4}r^\ell\sum_{j=0}^{K}\beta_j^{\ell n}(C)\widetilde P_j^{(0,v)}(2r^2-1).
\label{eq:jacobi_expansion}
\end{equation}
With this normalization, the radial $L^2((0,1),r^2\dd r)$ norm of $R_{\ell n}$ equals the Euclidean norm of the coefficient vector $\beta^{\ell n}$.

For $C=0$, the $j$th basis function has the differential eigenvalue
\begin{equation*}
\gamma_{\ell+2j}=(\ell+2j)(\ell+2j+3).
\end{equation*}
The orthonormal Jacobi recurrence coefficients are
\begin{equation*}
a_j=\frac{2(j+1)(j+v+1)}{(2j+v+2)\sqrt{(2j+v+1)(2j+v+3)}},\qquad b_j=\frac{v^2}{(2j+v)(2j+v+2)}.
\end{equation*}
Since $r^2=(1+\zeta)/2$, truncation at degree $K$ reduces the radial differential problem to
\begin{equation*}
A_\ell\beta^{\ell n}=\chi_{\ell n}\beta^{\ell n},
\label{eq:tridiag}
\end{equation*}
where
\begin{equation*}
(A_\ell)_{jj}=\gamma_{\ell+2j}+\frac{C^2}{2}(1+b_j),\qquad (A_\ell)_{j,j+1}=(A_\ell)_{j+1,j}=\frac{C^2}{2}a_j.
\end{equation*}
Thus, for each angular degree $\ell$, the radial functions are obtained from a real symmetric tridiagonal eigenvalue problem. The eigenvectors are normalized to unit Euclidean norm and substituted into \eqref{eq:jacobi_expansion}.

It remains to compute the Fourier eigenvalue associated with each radial function. Substituting the separated form \eqref{eq:gpswf_separation} into the eigenvalue equation \eqref{eq:gpswf_eigen} and applying the plane-wave expansion gives
\begin{equation}
4\pi\ii^\ell\int_0^1j_\ell(Crs)R_{\ell n}(s;C)s^2\dd s
=\alpha_{\ell n}(C)R_{\ell n}(r;C),
\label{eq:radial_integral}
\end{equation}
where $j_\ell$ is the spherical Bessel function. Define
\begin{equation*}
(\mathcal K_{\ell,C}R)(r)=\int_0^1j_\ell(Crs)R(s)s^2\dd s.
\end{equation*}
Since $R_{\ell n}$ is normalized in $L^2((0,1),r^2\dd r)$, taking the radial inner product of \eqref{eq:radial_integral} with $R_{\ell n}$ gives
\begin{equation*}
\alpha_{\ell n}(C)=4\pi\ii^\ell\int_0^1(\mathcal K_{\ell,C}R_{\ell n})(r)\overline{R_{\ell n}(r;C)}r^2\dd r.
\label{eq:fourier_eigenvalue_computation}
\end{equation*}
The phase $\ii^\ell$ corresponds to the positive exponential in \eqref{eq:restricted_fourier}. For the negative-exponent Fourier convention, the corresponding eigenvalue is its complex conjugate.

\bibliographystyle{plain}
\bibliography{ref}
\end{document}